\def\@xthm#1#2{\@beginassumption{#2}{\csname the#1\endcsname}{}\ignorespaces}
\def\@ythm#1#2[#3]{\@opargbeginassumption{#2}{\csname the#1\endcsname}{#3}\ignorespaces}
\def\@beginassumption#1#2#3{\par\addvspace{8pt plus3pt minus2pt}
\noindent{\csname#1headfont\endcsname#1\ \ignorespaces#3 #2.}
\csname#1font\endcsname\hskip.5em\ignorespaces}
\def\@endassumption{\par\addvspace{8pt plus3pt minus2pt}\@endparenv}
\begin{document}

\definecolor{darkgreen}{rgb}{0,0.4,0}
\definecolor{darkred}{rgb}{0.8,0,0}
\definecolor{darkblue}{rgb}{0,0,0.7}
\definecolor{braun}{rgb}{0.5,0.3,0}
\definecolor{gelb}{rgb}{1,1,0}
\definecolor{blaugrau}{rgb}{0,0.5,0.5}
\definecolor{hellblau}{rgb}{0,1,1}
\definecolor{violet}{rgb}{0.3,0,0.3}
\definecolor{lila}{rgb}{1,0,1}
\definecolor{grau}{rgb}{0.3,0.3,0.3}
\newcommand{\Cb}[1]{\textcolor{blue}{#1}}
\newcommand{\Cg}[1]{\textcolor{green}{#1}}
\newcommand{\Cr}[1]{\textcolor{red}{#1}}
\newcommand{\Cc}[1]{\textcolor{cyan}{#1}}
\newcommand{\Cm}[1]{\textcolor{magenta}{#1}}
\newcommand{\Cy}[1]{\textcolor{yellow}{#1}}
\newcommand{\Cv}[1]{\textcolor{violet}{#1}}
\newcommand{\Cdb}[1]{\textcolor{darkblue}{#1}}
\newcommand{\Cdr}[1]{\textcolor{darkred}{#1}}
\newcommand{\Cdg}[1]{\textcolor{darkgreen}{#1}}
\newcommand{\Cbraun}[1]{\textcolor{braun}{#1}}

\newcommand{\red}{\color{red}}
\newcommand{\nc}{\normalcolor}

\newcommand{\Huno}{\mathrm H^1(\Real^3)}
\newcommand{\Hdos}{\mathrm H^2(\Real^3)}
\newcommand{\Real}{\mathbb R}
\newcommand{\Nat}{\mathbb N}
\newcommand{\Lq}{\L^{q}}
\newcommand{\Lp}{\L^{p}(\Real^3)}
\newcommand{\Ldos}{\L^2(\Real^3)}
\newcommand{\Lqp}{\L^{pq}}
\newcommand{\LqpT}{\L_{\mathrm{T}}^{pq}}
\newcommand{\Linfty}{\L^{\infty}}
\newcommand{\Cont}{\mathrm{C}}
\newcommand{\T}{\mathrm{T}}
\newcommand{\X}{\mathrm{X}}
\newcommand{\A}{\mathcal{A}}
\newcommand{\Rep}{\mathrm{Re}}
\newcommand{\Imp}{\mathrm{Im}}

\newcommand{\Email}[1]{{\sl E-mail:\/} {\rm\textsf{#1}}}
\newcommand{\be}[1]{\begin{equation}\label{#1}}
\newcommand{\ee}{\end{equation}}
\newcommand{\nrm}[2]{\|{#1}\|_{\L^{#2}(\Real^3)}}
\newcommand{\ir}[1]{\int_{\Real^3}#1\,dx}
\newcommand{\supp}[1]{\hbox{\rm{supp}}(#1)}
\newcommand{\C}{\mathrm C}
\newcommand{\CGN}{\C_{\mathrm{GN}}}
\newcommand{\D}[1]{\mathrm D[#1]}
\newcommand{\E}{\mathrm E}
\renewcommand{\L}{\mathrm L}

\newcommand{\cadre}[1]{\begin{center}\begin{tabular}{|c|}\hline\cr
#1\cr\cr\hline \end{tabular}\end{center}}

\font\dc=cmbxti10

\overfullrule=0pt
\def\bull{{\vrule height.9ex width.8ex depth.1ex}}
\def\boxitt#1{\vbox{\hrule\hbox{\vrule\kern3pt\vbox{\kern7pt#1\kern7pt}\kern3pt\vrule}\hrule}}

\parindent=20pt

\title{EXISTENCE OF STEADY STATES FOR THE MAXWELL-SCHR\"ODINGER-POISSON SYSTEM:\\
EXPLORING THE APPLICABILITY OF THE CONCENTRATION-COMPACTNESS PRINCIPLE\footnote{~This work has been partially supported by MINECO (Spain), Project MTM2011-23384, and ANR projects NoNAP and STAB (France).}}

\author{I. CATTO$^{(1)}$, J. DOLBEAULT$^{(1)}$, O. S\'ANCHEZ$^{(2)}$ and J. SOLER$^{(2)}$}
\vskip1truecm

\address{$^{(1)}$Ceremade (UMR CNRS no. 7534), Universit\'e Paris-Dauphine\\
Place de Lattre de Tassigny, F-75775 Paris Cedex~16, France\\
\Email{catto@ceremade.dauphine.fr, dolbeaul@ceremade.dauphine.fr}\\[6pt]
$^{(2)}$Departamento de Matem\'atica Aplicada,\\
University of Granada, 18071--Granada, Spain\\
\Email{ossanche@ugr.es, jsoler@ugr.es}}
\medskip\maketitle\thispagestyle{empty}
\begin{history}
\end{history}
\begin{abstract} This paper is intended to review recent results and open problems concerning the existence of steady states to the Maxwell-Schr\"odinger system. A combination of tools, proofs and results are presented in the framework of the concentration--compactness method.\end{abstract}

\keywords{Steady states; variational methods; subadditivity inequality; constrained minimization; Sharp nonexistence. $\L^2$-norm constraint; Schr\"odinger-Poisson; Maxwell-Schr\"odinger; Schr\"odinger-Poisson-$X^\alpha$; Concentration-Compactness; Standing waves; Semiconductors; Plasma physics}


\parindent=20pt

\section{Introduction}

The concentration--compactness method is nowadays a basic tool in applied mathematics for the analysis of variational problems with lack of compactness or more specifically for proving existence of solutions of non-linear partial differential equations which are invariant under a group of transformations. In this review we explore the applicability of the concentration--compactness method on the $X^\alpha$-Schr\"odinger-Poisson model. We will also highlight some related questions, which raise a number of open issues. 

Our purpose is to study the existence of steady states of the so-called $X^\alpha$-Schr\"odinger-Poisson ($X^\alpha$-SP) model or Maxwell-Schr\"odinger-Poisson system:
\begin{eqnarray}
&&i\,\frac{\partial \psi}{\partial t}=-\Delta_x\psi+V(x,t)\,\psi-C\,|\psi(x,t)|^{2\alpha}\,\psi\,,\nonumber\\
&&-\Delta_xV=\epsilon\,4\pi\,|\psi|^2,\label{XASP}\\
&&\psi(x,t=0)=\phi(x)\,,\nonumber
\end{eqnarray}
with $\phi\in \L^2(\Real^3)$, $x\in\Real^3$, $t\ge 0$. The self-consistent Poisson potential $V$ is explicitly given by $V(x,t)=\epsilon\,|\psi(x,t)|^2\star|x|^{-1}$, where $\star$ refers to the convolution with respect to $x$ on $\Real^3$ and where $\epsilon$ takes the value $+1$ or $-1$, depending whether the interaction between the particles is repulsive or attractive. The system \eqref{XASP} can therefore be reduced to a single non-linear and non-local Schr\"odinger-type equation
\begin{eqnarray}\label{eq:XASP}
&&i\,\frac{\partial \psi}{\partial t}=-\Delta_x\psi+\epsilon\,\Big(|\psi|^2 \star|x|^{-1}\Big)\,\psi-C\,|\psi|^{2\alpha}\,\psi\,,\\
&&\psi(x,t=0)=\phi(x)\,.\nonumber
\end{eqnarray}
Such a model appears in various frameworks, such as black holes in gravitation \hbox{($\epsilon=-1$)}~\cite{RuSo}, one-dimensional reduction of electron density in plasma physics \hbox{($\epsilon=+1$)}, as well as in semiconductor theory ($\epsilon=+1$), as a correction to the Schr\"odinger-Poisson system (which is $X^\alpha$-SP with $C=0$): see~\cite{BoLoSo,LiSi,Mauser} and references therein.

In the plasma physics case, the $X^\alpha$-SP correction takes into account a nonlinear, although local, correction to the Poisson potential
of opposite sign given by $-\,C\,\vert\psi\vert^{2\alpha}$, where $C$ is a positive constant and where the parameter $\alpha$, responsible for the name of the model, takes values in the range $0<\alpha\le\frac23$. Some relevant values are for example $\alpha=\frac13$, which is called the Slater correction, or $\alpha=\frac23$, which gives rise to the so-called Dirac correction. The idea is to balance the Poisson potential (also called Coulombian potential in the electrostatic case) with a local potential term of opposite sign. This generates a competition between the two potential energies and the kinetic energy that, depending on the values of the constant $C$, can modify the typically dispersive dynamics of the Schr\"{o}dinger-Poisson system~\cite{IlSwLa,SaSo1} in the plasma physics case. The local nonlinear term also modifies the properties of the solutions in the gravitational case, thus leading to a richer behaviour~\cite{BoLoSaSo}. Note that the physical constants have been normalized to unity here for the sake of simplicity.

Throughout the paper we focus our attention on the plasma physical case. Similar techniques can be used for extending our results to the gravitational case. Notice that when $\epsilon=-1$ (gravitational case), the sign of the energy associated to the Poisson potential (also called Newtonian potential) allows to introduce symmetric rearrangements that contribute to simplify some computations~\cite{Lieb,LiebLoss}. In this paper, we shall therefore assume that
\[
\epsilon=+1\,.
\]

We will be concerned with the existence of standing waves, that is, solutions to \eqref{eq:XASP} of the form
\begin{eqnarray*}
\psi(x,t)=e^{i\ell_M t}\,\varphi(x)
\end{eqnarray*}
with $\ell_M>0$ and $\varphi$ in $\L^2(\Real^3)$ solving
\be{eq:sw}
-\Delta \varphi+\epsilon\,\big(|\varphi|^2 \star|x|^{-1}\big)\,\varphi-C\,|\varphi|^{2\,\alpha}\,\varphi\,+\ell_M\,\varphi=0\,.
\ee
Equation~\eqref{eq:sw} is a special case of Schr\"odinger-Maxwell equations~\cite{DAMu}.

The existence and stability analysis of such solutions relies on some preserved physical quantities. The total \emph{mass} (which is also the total electronic charge in the repulsive case, when $\epsilon=+1$)
\[
M[\psi]:=\int_{\Real^3}|\psi(x,t)|^2\,dx
\]
and the \emph{energy} functional
\[
\E[\psi]:=\E_{\mathrm{kin}}[\psi]+\E_{\mathrm{pot}}[\psi]
\]
are invariant quantities for any solution of $X^\alpha$-SP along the time evolution, where the \emph{kinetic} and \emph{potential} energies are defined by
\[
\E_{\mathrm{kin}}[\psi]:=\frac12\ir{|\nabla\psi(x,t)|^2}\,,\quad\E_{\mathrm{pot}}[\psi]:=\frac{\epsilon}4\,\D\psi-\frac C{2\alpha+2} \int_{\Real^3} |\psi(x,t)|^{2\alpha+2}\,dx
\]
and
\[
\D\psi:=\iint_{\Real^3\times\Real^3}\frac{|\psi(x,t)|^2\,|\psi(x',t)|^2}{|x-x'|}\,dx\,dx'\,.
\]

The existence of standing waves has been carried out from various perspectives in the vast mathematical literature devoted to this topic. Either one investigates the existence of critical points of the functional $\E[\varphi]+\ell_M\,M[\varphi]$ on the whole space $\Huno$, with the parameter $\ell_M$ being given and fixed, and in that case the $\L^2(\Real^3)$ norm of the solution is not prescribed (see for instance~\cite{DR} and references therein); or one looks for critical points of the energy functional $\E[\varphi]$ with prescribed $\L^2(\Real^3)$ norm, and then the parameter $\ell_M$ enters into the game as a Lagrange multiplier of the constrained minimization problem. From a physical point of view, the most interesting critical points, the so-called \emph{steady states,} are the minimizers of the problem
\be{minienergy}
I_M:=\inf\big\{\E[\varphi]\,:\,\varphi\in\Sigma_M\big\}\,,\quad\Sigma_M:=\big\{\varphi\in\Huno\,:\,\|\varphi\|_{\Ldos}^2=M\big\}\,.
\ee
Their interest lies in \emph{stability} properties stated in terms of the energy and the mass. Such a feature is of course well known in the literature, see for instance~\cite{CaLi}, and it provides an easier approach than other methods, which are anyway needed when elaborate variational methods are required like in~\cite{BeJeLu}. The energy functional is not bounded from below when $\alpha>\frac23$. When $\alpha>2$, the exponent $2\alpha+2$ lies outside of the interval $(2,6)$ and then $\Huno$ is not embedded in $L^{2\alpha+2}(\Real^3)$. We therefore restrict our analysis to the range $\alpha$ in $(0,2)$.

Concerning the existence of steady states, let us make the following observations. First of all, the energy and mass functionals are translation invariant that is, for every $y\in \Real^3$,
\[
\E[\varphi(\cdot +y)]=\E[\varphi]\,, \quad M[\varphi(\cdot +y)]=M[\varphi]\,.
\]
 Therefore the concentration--compactness method~\cite{bi:PLL-CC1cras,bi:PLL-CC1,bi:PLL-CC2} is the natural framework for the study of the existence of a minimizer and for the analysis of the behavior of the minimizing sequences to \eqref{minienergy} and their possible lack of compactness. According to the terminology of the concentration--compactness principle, from any minimizing sequence $\{\varphi_n\}_{n\ge 1}$ in $\Sigma_M$ we can extract a subsequence (denoted in the same way for simplicity) that either \emph{vanishes,} that is,
\be{vanishing}
\limsup_{n\to\infty}\;\sup_{y\in \Real^3}\int_{y+B_R}\varphi_n^2\,dx=0\quad\forall\,R>0\,,
\ee
or satisfies the property
\be{nonvanishing}
\exists\,R_0>0\,,\;\exists\,\varepsilon_0>0\,,\;\exists\,\{y_n\}_{n\ge 1}\subset\Real^3\quad\mbox{such that}\quad\int_{y_n+B_{R_0}}\varphi_n^2\,dx\ge\varepsilon_0\,.
\ee
In the first case, for any sequence $\{y_n\}_{n\geq 1}$ in $\Real^3$, $\{\varphi_n(\cdot+y_n)\}_{n\geq 1}$ converges to zero weakly in $\Huno$. In the second case, up to the extraction of a subsequence, the sequence $\{\varphi_n(\cdot+y_n)\}_{n\geq 1}$ converges weakly towards a nonzero function $\varphi_*$ such that
\[
\int_{\Real^3} \varphi_*^2\,dx=\mu > 0\,.
\]
If $\mu=M$, then compactness (\emph{i.e.,} the strong convergence of subsequences) holds. In the opposite case, $\mu <M$, then \emph{dichotomy} occurs, that is, the splitting of the functions in at least two parts that are going away from each other: see~\cite{bi:PLL-CC1cras,bi:PLL-CC1,bi:PLL-CC2} for more details.

The concentrated--compactness method yields the strict inequalities
\be{ineqstrict}
I_M<I_{M'}+I_{M-M'}\quad\forall\,M\,,\;M'\quad\mbox{such that}\quad 0<M'<M
\ee
as necessary and sufficient conditions for the \emph{relative compactness} up to translations of all minimizing sequences. In this case, we deduce the existence of a minimizer and its orbital stability under the flow \eqref{XASP}. The proof of this equivalence is based on the fact that the only possible loss of compactness for minimizing sequences occurs either from vanishing or from dichotomy. Note that the so-called large inequalities
\be{ineqlarge}
I_M\le I_{M'}+I_{M-M'}\quad\forall\,M\,,\;M'\quad\mbox{such that}\quad 0<M'<M
\ee
always hold true due to the translation invariance. For any $\varepsilon>0$, one may indeed find $C^\infty$ functions $\phi_\varepsilon \in\Sigma_{M'}$ and $\psi_\varepsilon\in\Sigma_{M-M'}$, both with compact supports, such that $I_{M'}\le\E[\phi_\varepsilon]\le I_{M'}+\varepsilon$ and $I_{M-M'}\le\E[\psi_\varepsilon]\le I_{M-M'}+\varepsilon$. Then, for any unit vector $e$ in $\Real^3$ and for $n\in \Nat$ large enough such that $\phi_\varepsilon$ and $\psi_\varepsilon(\cdot+n\,e)$ have disjoint supports, we have $\phi_\varepsilon+\psi_\varepsilon(\cdot+n\,e)\in \Sigma_M$ and 
\[
I_M\le\limsup_{n\to+\infty} \E[\phi_\varepsilon+\psi_\varepsilon(\cdot+n\,e)]\le I_{M'}+I_{M-M'}+2\varepsilon\,.
\]
The conclusion follows since $\varepsilon$ can be made arbitrarily small. For our particular problem, it can be easily proved that vanishing cannot hold for any minimizing sequence of \eqref{minienergy} if $I_M <0$, although it might hold when $I_M=0$. This is based on Lemma I.1 in~\cite{bi:PLL-CC2} that ensures that vanishing minimizing sequences converge to zero strongly in $L^{2\alpha+2}(\Real^3)$. When $I_M=0$, vanishing has to be avoided by considering particular sequences.

Furthermore, when relative compactness up to translations can be proved for any minimizing sequence, it can also be stated that the minimizing steady state solution is orbitally stable in the sense developed in~\cite{CaLi}, thanks to the fact that mass and energy are time preserved quantities for solutions to \eqref{XASP}. In this sense, let us mention that the well-posedness of the $X^\alpha$-SP system was proved in~\cite{Caze} (Remark~6.5.3) for $\alpha\in (0,\frac23)$. For the case $\alpha=\frac23$, the existence of global solutions was proved~\cite{Caze} only for initial data with $\|\phi\|_{\Huno}$ small enough. A theory of existence of $\Ldos$ mixed-state solutions was developed in~\cite{BoLoSo} for the Slater case, $\alpha=\frac 13$. Stability properties have been proved to be false for other kind of standing waves, see for instance~\cite{BeJeLu}.

Our aim is to discuss the applicability of the concentration--compactness method to the problem \eqref{minienergy} for proving the existence of $X^\alpha$-SP \emph{steady states.} Recall that such solutions are minimizers of the energy functional under mass constraint. Let us summarize the results presented in this work in Table \ref{table:resultRef}, with some references for previously known results.
\begin{table}[ht]
\begin{tabular}{|c|c|c|c|}
\hline
$\alpha$ & Energy infimum & Existence of steady states & Ref.\\
\hline
$0$ & $I_M <0$ & No &~\cite{IlSwLa,SaSo1}\\
\hline
$(0,\frac12)$ & $I_M <0$ & Yes, for small $M$ &~\cite{CL1,SaSo,BeSiJFA,BeSiZAMP}\\
& & Open for large $M$ &\\
\hline
$\frac12$ &$I_M=0$ if $C<\frac3{\sqrt2\,\C_{1/2}}$ & No &~\cite{JeanLuo2012}\\
& $I_M=0$ if $C=\frac3{\sqrt2\,\C_{1/2}}$ & Open &\\
& $I_M <0$ if $C> \frac3{\sqrt2\,\C_{1/2}}$ & Yes &\\
\hline
$(\frac12,\frac23)$ & $I_M=0$ if $C\,M^{4\alpha-2}<V_c(\alpha)$ & No &\\
& $I_M=0$ if $C\,M^{4\alpha-2}=V_c(\alpha)$ & Yes &~\cite{JeanLuo2012}\\
& $I_M <0$ if $C\,M^{4\alpha-2}> V_c(\alpha)$ & Yes &~\cite{BeSiZAMP}\\
\hline
$\frac23$ & $I_M=0$ if $C\,M^{\frac23}\le\frac{5}{3\,\C_{2/3}}$ & No &\\
& $I_M=-\infty$ if $C\,M^{\frac23} > \frac{5}{3\,\C_{2/3}} $& No &\\
\hline
$(\frac23,2)$ & $I_M=-\infty$ & No &~\cite{BeJeLu}\\
\hline
\end{tabular}
\label{table:resultRef}
\caption{Table of existence results of steady states and related references.}
\end{table}
In this table, the constant $\C_\alpha$ denotes the optimal constant in the inequality
\[
\nrm u{2\alpha+2}^{2\alpha+2}\le\C_\alpha\,\nrm u2^{8\alpha-4}\,\D u^{2-3\alpha}\,\nrm{\nabla u}2^{6\alpha-2}\quad\forall\,u\in\Huno\,,
\]
with $\D u=4\pi\ir{u^2\,(-\Delta)^{-1}\,u^2}$. The constant
\be{Vc}
V_c(\alpha):=\frac{\alpha+1}{\C_\alpha}\left(\frac1{3\alpha-1}\right)^{3\alpha-1}\left(\frac1{2\,(2-3\alpha)} \right)^{{2-3\alpha}}
\ee
will appear in Proposition \ref{nonnegative}.

In this review, we emphasize that many partial results can been found in various papers and, concerning variational approaches,  particularly in~\cite{BeSiZAMP,BeSiJFA,JeanLuo2012,BeJeLu}. For other existence and non-existence results with the Lagrange parameter taken as a parameter, we refer to~\cite{MR1986248,DAMu,MR1896096,DR,MR2318269}. For solutions satisfying a \emph{Pohozaev constraint} (see Proposition~\ref{Prop:Phozaev}) and in particular the so-called \emph{ground state} solutions, we refer to \cite{MR2422637,DAMu,DR}.  Our contribution mostly lies in a unified framework based on the concentration-compactness method. Results corresponding to the ranges $0<\alpha<\frac 12$, $\alpha=\frac 12$ and $\frac 12<\alpha<\frac 23$ have been collected respectively in Propositions~\ref{prop:below-half}, \ref{prop:half}, and \ref{prop:above}. Our main original contribution deals with the threshold case $\alpha=\frac 12$. We also invite the reader to pay attention to the remarks of Section~\ref{Sec:steady} and to Proposition~\ref{prop:onlyVanishing} for a some open problems.\nc

In the range $\alpha\in (0,\frac12)$, we are going to prove that the strict inequalities~\eqref{ineqstrict} hold at least for $M$ small enough. The strategy of proof is inspired by~\cite{CL1} (Appendix~3) and is reproduced here for the reader's convenience. The same result has been derived in~\cite{BeSiZAMP,SaSo} for $\alpha=\frac13$ and $0<M <M_c$, and in~\cite{BeSiJFA,BeSiZAMP,CL1,SaSo} for any $\alpha\in (0,\frac12)$ and any small positive $M$. As far as the authors know, the critical case ($\alpha=\frac12$) has been treated only in~\cite{JeanLuo2012} in the specific case $C=1$, where $I_M=0$; in that case the non-existence of a minimizer has been established. We will show here that there exists a critical value for $C$, which is $3/(\sqrt2\,\C_{1/2})$, such that for larger values of~$C$ the minimizers exists but not for smaller values. The existence of minimizers for the critical value of $C$ is still an open problem, equivalent to the existence of optimal functions for the above inequality with $\alpha=\frac12$. When $\alpha\in\left(\frac12,\frac23\right)$, existence holds if and only if $M$ is large enough. The result of existence of steady states was previously obtained in~\cite{BeSiZAMP}. No steady states exist in the cases $\alpha=0$ or $\alpha\in\left[\frac23,2\right)$. The result for $\alpha=0$ is in agreement with the general dispersion property verified by the solutions to the repulsive Schr\"odinger-Poisson system proved in~\cite{IlSwLa,SaSo1}. It is also one of the motivations for introducing the local, nonlinear correction to the model. Although the existence of minimizers cannot be expected in the case $\alpha\in\left(\frac23,2\right)$ because $I_M=-\infty$, the existence and instability of other standing waves has recently been proved in~\cite{BeJeLu}. Also see \cite{MR2422637,DAMu,DR} for \emph{ground state} solutions.

For completeness, let us mention that symmetry breaking issues are not completely understood~\cite{LopesMaris,MR2926239}. In this direction, new approaches could be useful like those developed in~\cite{FelliSchneider} and subsequent papers. Stability of minimizers with null energy also raises a number of open questions.

\section{\emph{A priori} estimates and consequences}\label{sec:energy}

Before tackling the existence of steady states, we have to make sure that the minimization problem is well-posed for $\alpha\in[0,\frac23)$, and for small masses $M$ in the case $\alpha=\frac 23$. Let us first recall the Gagliardo-Nirenberg inequality
\be{GNinequality}
\nrm u{2\alpha+2}^{2\alpha+2}\le\CGN(\alpha)\,\nrm{\nabla u}2^{3\alpha}\,\nrm u2^{2-\alpha}\quad\forall\,u\in\Huno
\ee
where $\CGN(\alpha)$ is the optimal constant, depending only on $\alpha\in[0,2]$.

\begin{lemma}\label{lem:apriori} For any $\alpha\in[0,\frac 12]$, there is a positive constant $\mathrm K_\alpha$ such that, for any $u\in\Huno$, we have
\be{App:Interpolation1}
\nrm u{2\alpha+2}^{2\alpha+2}\le\mathrm K_\alpha\,\nrm u2^{2-4\alpha}\,\D u^\alpha\,\nrm{\nabla u}2^\alpha
\ee
and for any $\alpha\in[\frac 12,\frac23]$, there is a positive constant $\C_\alpha$ such that, for any $u\in\Huno$, we have
\be{App:Interpolation2}
\nrm u{2\alpha+2}^{2\alpha+2}\le\C_\alpha\,\nrm u2^{8\alpha-4}\,\D u^{2-3\alpha}\,\nrm{\nabla u}2^{6\alpha-2}\,.
\ee
\end{lemma}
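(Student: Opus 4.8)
The plan is to reduce both inequalities to a single ``cubic'' estimate and then recover the full ranges of $\alpha$ by Hölder interpolation, supplemented by the Sobolev inequality for \eqref{App:Interpolation2}. The one nontrivial ingredient, valid for every $u\in\Huno$, is
\[
\nrm u3^3\le(4\pi)^{-1/2}\,\D u^{1/2}\,\nrm{\nabla u}2\,.
\]
Once this is in hand, everything else is bookkeeping of exponents, arranged so that the powers of $\nrm u2$, $\D u$ and $\nrm{\nabla u}2$ come out exactly as prescribed. I expect this cubic estimate to be the only step that is not purely mechanical.

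To prove it, I would first replace $u$ by $|u|$: since $|\nabla|u||\le|\nabla u|$ almost everywhere and $\D u$ depends only on $|u|^2$, it suffices to treat $u\ge0$ (this keeps the same constant for signed or complex $u$). Next I would read $\ir{u^3}$ as the duality pairing $\langle u,u^2\rangle$ and bound it by Cauchy--Schwarz after inserting $(-\Delta)^{-1/2}(-\Delta)^{1/2}$:
\[
\ir{u^3}=\big\langle(-\Delta)^{1/2}u,\,(-\Delta)^{-1/2}u^2\big\rangle\le\nrm{\nabla u}2\,\big\langle u^2,(-\Delta)^{-1}u^2\big\rangle^{1/2}=(4\pi)^{-1/2}\,\nrm{\nabla u}2\,\D u^{1/2}\,,
\]
where I used $\|(-\Delta)^{1/2}u\|_{\L2}=\nrm{\nabla u}2$ and the identity $\D u=4\pi\,\langle u^2,(-\Delta)^{-1}u^2\rangle$ recorded in the introduction. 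Note that $\D u<\infty$ for $u\in\Huno$ by Hardy--Littlewood--Sobolev, so the pairing is well defined; otherwise the inequality is vacuous. This is precisely the $\alpha=\frac12$ instance of both \eqref{App:Interpolation1} and \eqref{App:Interpolation2}, which indeed coincide there.

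For \eqref{App:Interpolation1} with $\alpha\in[0,\frac12]$ one has $2\alpha+2\in[2,3]$, so a one-line Hölder interpolation of $\L^{2\alpha+2}$ between $\L^2$ and $\L^3$ gives $\nrm u{2\alpha+2}^{2\alpha+2}\le\nrm u2^{2-4\alpha}\,\nrm u3^{6\alpha}$; raising the cubic estimate to the power $2\alpha$ and substituting yields \eqref{App:Interpolation1} with $\mathrm K_\alpha=(4\pi)^{-\alpha}$. For \eqref{App:Interpolation2} with $\alpha\in[\frac12,\frac23]$ one has $2\alpha+2\in[3,\frac{10}3]$, and here I would run a three-point Hölder interpolation of $\L^{2\alpha+2}$ among $\L^2$, $\L^3$ and $\L^6$ with weights $\theta_1=\frac{2(2\alpha-1)}{\alpha+1}$, $\theta_2=\frac{3(2-3\alpha)}{\alpha+1}$, $\theta_3=\frac{3(2\alpha-1)}{\alpha+1}$, which sum to one and are nonnegative precisely on $[\frac12,\frac23]$. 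Feeding the cubic estimate into the $\L^3$ factor and the Sobolev inequality $\nrm u6\le\mathrm S\,\nrm{\nabla u}2$ into the $\L^6$ factor produces exactly the powers $\nrm u2^{8\alpha-4}\,\D u^{2-3\alpha}\,\nrm{\nabla u}2^{6\alpha-2}$. The endpoints are useful sanity checks: at $\alpha=\frac12$ the scheme collapses to the cubic estimate ($\theta_1=\theta_3=0$), while at $\alpha=\frac23$ the $\L^3$ weight $\theta_2$ vanishes and one recovers Gagliardo--Nirenberg \eqref{GNinequality}.

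The main obstacle is genuinely the cubic estimate, or rather the observation that $\ir{u^3}$ is an $\dot H^1$--$\dot H^{-1}$ pairing whose negative-norm factor is exactly $\D u^{1/2}$ up to the constant $(4\pi)^{-1/2}$; after that, only exponent arithmetic remains. One caveat I would flag: the scaling $u\mapsto u(\lambda\,\cdot)$ forces every such inequality to be homogeneous of degree $-3$, which pins the exponent of $\nrm{\nabla u}2$ in \eqref{App:Interpolation1} to $2\alpha$ rather than $\alpha$ (the two agree only at $\alpha=0$). I would therefore read \eqref{App:Interpolation1} with $\nrm{\nabla u}2^{2\alpha}$, in agreement with the $\alpha=\frac12$ matching above and with \eqref{App:Interpolation2}.
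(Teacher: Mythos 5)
Your proof is correct and essentially the paper's: the key cubic estimate $\nrm u3^6\le(4\pi)^{-1}\,\D u\,\nrm{\nabla u}2^2$ is obtained in the paper by expanding $0\le\ir{|\nabla u-a\,\nabla(-\Delta)^{-1}u^2|^2}$ and optimizing over $a$, which is exactly your $\dot{\mathrm H}^1$--$\dot{\mathrm H}^{-1}$ Cauchy--Schwarz pairing in disguise, and the remaining exponent bookkeeping matches, up to your using $\L^2$--$\L^3$--$\L^6$ interpolation plus Sobolev where the paper uses $\L^3$--$\L^{10/3}$ interpolation plus Gagliardo--Nirenberg (the same thing reorganized, and harmless since only existence of a constant is claimed). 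Your caveat about the exponent is also right: as printed, \eqref{App:Interpolation1} is not scale invariant, and both the paper's own proof and the matching with \eqref{App:Interpolation2} at $\alpha=\tfrac12$ confirm that the gradient factor should read $\nrm{\nabla u}2^{2\alpha}$.
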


The case $\alpha=\frac12$ has been established by P.-L.~Lions~\cite{Lions2} in Formula (55) page~54 and is common to the two inequalities, with $\mathrm K_{1/2}=\C_{1/2}$. The case $\alpha=\frac23$ is a special case of \eqref{GNinequality}, with $\C_{2/3}=\CGN(2/3)$. For completeness, let us give a proof.

\begin{proof} We recall that $\D u=4\pi\ir{u^2\,(-\Delta)^{-1}\,u^2}$. By expanding the square and integrating by parts, we get that
\begin{multline*}
0\le\ir{|\nabla u-a\,\nabla(-\Delta)^{-1}\,u^2|^2}\\
=\ir{|\nabla u|^2}+a^2\ir{u^2\,(-\Delta)^{-1}\,u^2}-2a\ir{u^3}\,,
\end{multline*}
that is, for an arbitrary positive parameter $a$,
\[
\ir{u^3}\le\frac1{2a}\ir{|\nabla u|^2}+\frac a2\ir{u^2\,(-\Delta)^{-1}\,u^2}\,.
\]
After optimizing on $a$, we obtain that
\be{estimlions}
\nrm u3^6\le\frac1{4\pi}\,\nrm{\nabla u}2^2\,\D u\,.
\ee
This proves \eqref{App:Interpolation1} and \eqref{App:Interpolation2} when $\alpha=\frac12$. The range $\alpha\in[0,\frac 12]$ is then covered by H\"older's inequality $\nrm u{2\alpha+2}\le\nrm u2^{2-4\alpha}\,\nrm u3^{6\alpha}$.

For $\alpha=\frac23$, \eqref{App:Interpolation2} coincides with \eqref{GNinequality}, namely
\[
\nrm u{10/3}^{10/3}\le\CGN(\tfrac23)\,\nrm{\nabla u}2^2\,\nrm u2^{4/3}\,.
\]
Hence the case $\alpha\in[\frac 12,\frac23]$ is covered by H\"older's inequality
\[
\nrm u{2\alpha+2}^{\alpha+1}\le\nrm u3^{3(2-3\alpha)}\,\nrm u{10/3}^{5(2\alpha-1)}\,.
\]
\end{proof}

Notice that from~\eqref{estimlions} we know that
\[
\C_{1/2}\le\frac1{2\,\sqrt\pi}\,.
\]

\begin{lemma}\label{boundedness} The energy functional $\mathrm E$ is bounded from below in $\Sigma_M$, if either $\alpha\in[0,\frac23)$ or $\alpha=\frac23$ and $C\,\CGN(\frac 23)\,M^{2/3}\le\frac53$. If either $\alpha\in[0,\frac23)$ or $\alpha=\frac23$ and $C\,\CGN(\frac 23)\,M^{2/3}<\frac53$, any minimizing sequence for $I_M$ is uniformly bounded in $\Huno$.\end{lemma}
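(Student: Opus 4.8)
The plan is to bound $\E$ from below by keeping the nonnegative Poisson contribution $\tfrac14\,\D u$ as a free positive term and dominating the focusing term $\tfrac C{2\alpha+2}\,\nrm u{2\alpha+2}^{2\alpha+2}$ by the kinetic energy alone, via the Gagliardo--Nirenberg inequality \eqref{GNinequality}. For $u\in\Sigma_M$ the constraint $\nrm u2^2=M$ turns \eqref{GNinequality} into
\[
\nrm u{2\alpha+2}^{2\alpha+2}\le\CGN(\alpha)\,M^{1-\alpha/2}\,\nrm{\nabla u}2^{3\alpha}\,,
\]
so that, since $\epsilon=+1$,
\[
\E[u]=\frac12\,\nrm{\nabla u}2^2+\frac14\,\D u-\frac C{2\alpha+2}\,\nrm u{2\alpha+2}^{2\alpha+2}\ge\frac12\,\nrm{\nabla u}2^2+\frac14\,\D u-\frac{C\,\CGN(\alpha)\,M^{1-\alpha/2}}{2\alpha+2}\,\nrm{\nabla u}2^{3\alpha}\,.
\]
Everything then hinges on comparing the exponent $3\alpha$ of the last term with the exponent $2$ of the kinetic energy.

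First I would treat the subcritical range $\alpha\in[0,\frac23)$, where $3\alpha<2$. Writing $\nrm{\nabla u}2^{3\alpha}=\big(\nrm{\nabla u}2^2\big)^{3\alpha/2}$ with $3\alpha/2<1$, the elementary Young inequality provides, for every $\delta>0$, a constant $C_\delta$ with $\nrm{\nabla u}2^{3\alpha}\le\delta\,\nrm{\nabla u}2^2+C_\delta$. Choosing $\delta$ so that $\frac{C\,\CGN(\alpha)\,M^{1-\alpha/2}}{2\alpha+2}\,\delta\le\frac14$ gives
\[
\E[u]\ge\frac14\,\nrm{\nabla u}2^2+\frac14\,\D u-C_0\ge-C_0\,,
\]
with $C_0=C_0(\alpha,C,M)$ finite and independent of $u$. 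This proves boundedness from below for \emph{all} values of $C$ and $M$, and the surviving term $\tfrac14\,\nrm{\nabla u}2^2$ yields, along any minimizing sequence, $\nrm{\nabla u_n}2^2\le4\,(\E[u_n]+C_0)$; combined with $\nrm{u_n}2^2=M$ this is the uniform $\Huno$ bound.

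The delicate point is the critical exponent $\alpha=\frac23$, for which $3\alpha=2$ and no constant can be split off. Here \eqref{GNinequality} reads $\nrm u{10/3}^{10/3}\le\CGN(\frac23)\,M^{2/3}\,\nrm{\nabla u}2^2$, whence
\[
\E[u]\ge\Big(\frac12-\frac{3\,C}{10}\,\CGN(\tfrac23)\,M^{2/3}\Big)\,\nrm{\nabla u}2^2+\frac14\,\D u\,.
\]
The prefactor of $\nrm{\nabla u}2^2$ is nonnegative precisely when $C\,\CGN(\frac23)\,M^{2/3}\le\frac53$, in which case $\E[u]\ge\frac14\,\D u\ge0$ is bounded from below; this is exactly the stated threshold. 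For the uniform $\Huno$ estimate one must instead require the strict inequality $C\,\CGN(\frac23)\,M^{2/3}<\frac53$, so that the prefactor is strictly positive and again controls $\nrm{\nabla u_n}2$ along minimizing sequences.

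I expect the $\le$ versus $<$ dichotomy at $\alpha=\frac23$ to be the only real subtlety, and it is conceptual rather than computational: at equality the kinetic prefactor vanishes, so boundedness from below still holds thanks to $\D u\ge0$, but the gradient is no longer controlled a priori, which is exactly why the uniform $\Huno$ bound is asserted only under the strict condition. Everything else reduces to the single inequality \eqref{GNinequality} together with Young's inequality, and the sharp constant $\CGN(\frac23)$ is what propagates into the numerical threshold $\frac53$.
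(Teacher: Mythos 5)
Your proof is correct and follows exactly the paper's route: apply the Gagliardo--Nirenberg inequality \eqref{GNinequality} under the constraint $\nrm u2^2=M$ to obtain $\E[\varphi]\ge\frac12\,\nrm{\nabla\varphi}2^2-\frac{C\,\CGN(\alpha)}{2\alpha+2}\,M^{\frac{2-\alpha}2}\,\nrm{\nabla\varphi}2^{3\alpha}$, which is the paper's entire displayed proof. You merely make explicit the elementary conclusions the paper leaves implicit (Young's inequality when $3\alpha<2$, and the sign of the kinetic prefactor at the critical exponent $\alpha=\frac23$, including the $\le$ versus $<$ distinction), all of which is accurate.
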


\begin{proof} As a direct consequence of \eqref{GNinequality}, for every $\varphi\in \Sigma_M$ we have the estimate
\[
\E[\varphi]\ge\frac 12\,\nrm{\nabla\varphi}2^2-\frac{C\,\CGN(\alpha)}{2\alpha+2}\,M^{\frac{2-\alpha}2}\,\nrm{\nabla\varphi}2^{3\alpha}\,.
\]
\end{proof}

One of the main ingredients in our analysis is the scaling properties of the terms involved in the functional $\mathrm E$.

\begin{lemma}\label{scaling} Let $\varphi \in \Huno$. Assume that $\lambda>0$, let $p$ and $q$ be real numbers and define $\varphi_\lambda^{p,q}(x):=\lambda^p\,\varphi(\lambda^q\,x)$. Then we have
\begin{eqnarray*}
&&\ir{|\varphi_\lambda^{p,q} (x)|^2}=\lambda^{2p-3q}\,\ir{|\varphi(x)|^2}\,,\\
&&\E[ \varphi_\lambda^{p,q}]=\tfrac12\,\lambda^{2p-q}\ir{|\nabla\varphi|^2}+\tfrac14\,\lambda^{4p-5q}\,\D\varphi-\tfrac{\lambda^{(2\alpha+2)p-3q}}{2\alpha+2}\,C\ir{|\varphi|^{2\alpha+2}}\,.
\end{eqnarray*}
In the particular case $\varphi_\lambda(x):=\lambda^{\frac 32}\,\varphi(\lambda\,x)$, the mass is preserved,
\begin{multline*}
\ir{|\nabla\varphi_\lambda|^2}=\lambda^2\ir{|\nabla\varphi|^2}\,,\quad\D{\varphi_\lambda}=\lambda\,\D\varphi\,,\\
\mbox{and}\quad\ir{|\varphi_\lambda|^{2\alpha+2}}=\lambda^{3\alpha}\ir{|\varphi|^{2\alpha+2}}\,.
\end{multline*}
As a consequence, we have that $M\mapsto I_M$ is non increasing and
\[
I_M\le 0\quad\forall\,M\ge 0\,,
\]
with $I_M=-\infty$ when $\alpha>\frac 23$, for every $M>0$.
\end{lemma}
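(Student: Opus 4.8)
\section*{Proof proposal}

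The plan is to establish the three scaling identities by a single change of variables, then read off the two qualitative consequences by sending the scaling parameter to the endpoints $0$ and $+\infty$, and finally obtain the monotonicity of $M\mapsto I_M$ from the large subadditivity inequality \eqref{ineqlarge} already at our disposal. First I would prove the identities for $\varphi_\lambda^{p,q}(x)=\lambda^p\,\varphi(\lambda^q\,x)$ by substituting $y=\lambda^q\,x$ (so that $dx=\lambda^{-3q}\,dy$) in each of the four integrals. The mass integral picks up $\lambda^{2p-3q}$; since $\nabla\varphi_\lambda^{p,q}(x)=\lambda^{p+q}\,(\nabla\varphi)(\lambda^q\,x)$, the Dirichlet integral scales as $\lambda^{2p-q}$; in the double integral defining $\D{\cdot}$ one uses in addition $|x-x'|=\lambda^{-q}\,|y-y'|$, which produces the factor $\lambda^{4p-6q+q}=\lambda^{4p-5q}$; and the $\L^{2\alpha+2}$ integral yields $\lambda^{(2\alpha+2)p-3q}$. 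Collecting these with the coefficients $\tfrac12$, $\tfrac14$ and $-\tfrac{C}{2\alpha+2}$ gives the stated formula for $\E[\varphi_\lambda^{p,q}]$, and specializing to $p=\tfrac32$, $q=1$ gives $2p-3q=0$ (mass preserved) together with the three exponents $2$, $1$ and $3\alpha$. All four integrals are finite for $\varphi\in\Huno$: the nonlinear one by the Sobolev embedding $\Huno\hookrightarrow\L^{2\alpha+2}(\Real^3)$, valid for $\alpha\in[0,2]$, and $\D\varphi$ by the Hardy--Littlewood--Sobolev inequality, so these manipulations are legitimate.

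For the consequences I would fix $M>0$, pick any $\varphi\in\Sigma_M$ and apply the mass-preserving dilation $\varphi_\lambda$, which stays in $\Sigma_M$, so that $I_M\le\E[\varphi_\lambda]$ for every $\lambda>0$. Letting $\lambda\to 0^+$, the kinetic ($\lambda^2$), Poisson ($\lambda$) and, for $\alpha>0$, nonlinear ($\lambda^{3\alpha}$) terms all tend to $0$, whence $I_M\le 0$; for $\alpha=0$ the nonlinear term is the negative constant $-\tfrac{C}{2}\,M$, so $I_M\le-\tfrac{C}{2}\,M\le 0$, while for $M=0$ one has $I_0=\E[0]=0$. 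Letting instead $\lambda\to+\infty$ in the case $\alpha>\tfrac23$, the exponent $3\alpha>2$ makes the negative nonlinear term dominate the $\lambda^2$ and $\lambda$ terms, so $\E[\varphi_\lambda]\to-\infty$ and hence $I_M=-\infty$ for every $M>0$.

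Finally, the monotonicity would follow not from a further scaling but from the large inequality \eqref{ineqlarge}: for $0<M_1<M_2$, taking $M=M_2$ and $M'=M_1$ gives $I_{M_2}\le I_{M_1}+I_{M_2-M_1}$, and since $I_{M_2-M_1}\le 0$ by the previous step we conclude $I_{M_2}\le I_{M_1}$, that is, $M\mapsto I_M$ is non increasing (the cases in which some value equals $-\infty$ being trivial). I do not expect a serious obstacle here: the only points requiring care are the justification that all four functionals are finite on $\Huno$, so that the limits $\lambda\to 0^+$ and $\lambda\to+\infty$ may be taken termwise, and the bookkeeping of the exponent $3\alpha$ against $1$ and $2$ that separates the regimes $\alpha<\tfrac23$, $\alpha=\tfrac23$ and $\alpha>\tfrac23$.
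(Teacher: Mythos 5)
Your proposal is correct and follows essentially the same route as the paper: the identities come from the same change of variables (which the paper leaves to the reader), $I_M\le 0$ and $I_M=-\infty$ for $\alpha>\frac23$ follow from the limits $\lambda\to 0^+$ and $\lambda\to+\infty$ applied to the mass-preserving dilation, and the monotonicity of $M\mapsto I_M$ is deduced from the large inequality \eqref{ineqlarge} combined with $I_{M_2-M_1}\le 0$. Your added care about finiteness of the functionals on $\Huno$ and the $\alpha=0$ endpoint is sound but does not change the argument.
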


\begin{proof} The reader is invited to check the changes of variables. Let $\varphi$ be any function in $\Sigma_M$. Then, we have
\[
I_M\le\E[\varphi_\lambda]=\frac{\lambda^2}2\int_{\Real^3}|\nabla\varphi|^2\,dx+\frac{\lambda}4\,\D\varphi-\frac{\lambda^{3\alpha}\,C}{2\alpha+2}\int_{\Real^3}|\varphi|^{2\alpha+2}\,dx
\]
for all $\lambda>0$, and one concludes by letting the scaling parameter $\lambda$ go to zero that $I_M\le 0$. As a consequence of \eqref{ineqlarge}, the function $M\mapsto I_M$ is non-increasing. The last claim follows by assuming that $\alpha>\frac 23$ and by letting $\lambda$ go to infinity.\end{proof}

\begin{remark}\label{Rem:Vanishing} If $I_M=0$ for some $M>0$, we may built a minimizing sequence that converges to zero weakly in $\Huno$ by using the scaling properties. In fact, Lemma~I.1 in~\cite{bi:PLL-CC2} can be applied to any minimizing sequence in order to prove that vanishing cannot hold in the opposite case, $I_M<0$. Therefore, the condition $I_M<0$ is necessary to ensure the relative compactness up to translations of any minimizing sequence. This is the motivation for characterizing the situations in which~$\mathrm E$ reaches negative values.\end{remark}

\begin{lemma}\label{lemaequiv} Let $M>0$ and $\alpha\in[\frac13,\frac23]$. Then $\mathrm E$ takes negative values in $\Sigma_M$ if and only if the functional
\[
\varphi\mapsto\left(\frac1{3\alpha-1}\int_{\Real^3}|\nabla\varphi|^2\,dx\right)^{3\alpha-1}\left(\frac{\D\varphi}{2\,(2-3\alpha)}\right)^{2-3\alpha}-\frac C{\alpha+1}\int_{\Real^3}|\varphi|^{2\alpha+2}\,dx
\]
also takes negative values in $\Sigma_M$. Moreover, if $\alpha\in(\frac13,\frac23)$, then
\be{LowerBoundEnergy}
\E[\varphi]\ge\frac 14\,\lambda[\varphi]\,\D\varphi\left[1-\left(\frac{C\,M^{4\alpha-2}}{V_c(\alpha)}\right)^\frac1{3-2\alpha}\right]\quad\forall\,\varphi\in\Sigma_M
\ee
with $\lambda[\varphi]:=\left(\frac{3\alpha-1}{\alpha+1}\,C\,\frac{\ir{|\nabla\varphi|^2}}{\ir{\varphi^{2\alpha+2}}}\right)^\frac1{2-3\alpha}$ and $V_c(\alpha)$ given by \eqref{Vc}. \end{lemma}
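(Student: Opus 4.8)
The whole lemma rests on a single pointwise inequality, obtained by a weighted arithmetic--geometric mean (Young) inequality applied to the two positive terms of the energy. The plan is to exploit that, under the mass-preserving scaling $\varphi_\lambda(x)=\lambda^{3/2}\varphi(\lambda x)$ of Lemma~\ref{scaling}, the kinetic and Coulomb terms behave like $\lambda^2$ and $\lambda$, while the nonlinear term behaves like $\lambda^{3\alpha}$ with $3\alpha\in[1,2]$ for $\alpha\in[\frac13,\frac23]$. Since $3\alpha$ lies between $1$ and $2$, Young's inequality with the weights $3\alpha-1$ and $2-3\alpha$ (which sum to $1$ and are nonnegative exactly on this range) yields
\[
\tfrac12\ir{|\nabla\varphi|^2}+\tfrac14\,\D\varphi\ge\tfrac12\left(\tfrac1{3\alpha-1}\ir{|\nabla\varphi|^2}\right)^{3\alpha-1}\left(\tfrac{\D\varphi}{2\,(2-3\alpha)}\right)^{2-3\alpha}=:\tfrac12\,P[\varphi]\,.
\]
Subtracting $\frac C{2\,(\alpha+1)}\ir{|\varphi|^{2\alpha+2}}$ from both sides gives the key estimate $\E[\varphi]\ge\frac12\,G[\varphi]$, where $G$ is precisely the functional in the statement. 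The one thing to verify is that the overall multiplicative constant is exactly $\frac12$; a direct check of the factors $2^{-(3\alpha-1)}$ and $2^{-(2-3\alpha)}$ produced by the normalisations confirms this, and it is what fixes the weights $\frac1{3\alpha-1}$ and $\frac1{2(2-3\alpha)}$ appearing inside $G$.

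Granting $\E[\varphi]\ge\frac12\,G[\varphi]$, one direction of the equivalence is immediate: if $\E$ is negative at some $\varphi\in\Sigma_M$, then so is $G$. For the converse I would use scaling. Both $P$ and the nonlinear integral scale like $\lambda^{3\alpha}$, so $G[\varphi_\lambda]=\lambda^{3\alpha}\,G[\varphi]$, which keeps the sign of $G[\varphi]$ for every $\lambda>0$. Moreover the Young inequality above is an \emph{equality} exactly when the two positive terms are balanced, i.e. at the unique scale $\lambda_0$ for which $\frac{\lambda_0^2}{2\,(3\alpha-1)}\ir{|\nabla\varphi|^2}=\frac{\lambda_0}{4\,(2-3\alpha)}\,\D\varphi$. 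At that scale $\E[\varphi_{\lambda_0}]=\frac12\,G[\varphi_{\lambda_0}]=\frac{\lambda_0^{3\alpha}}2\,G[\varphi]$, so $G[\varphi]<0$ forces $\E[\varphi_{\lambda_0}]<0$ with $\varphi_{\lambda_0}\in\Sigma_M$. Hence $\E$ and $G$ take negative values on $\Sigma_M$ simultaneously. The endpoints $\alpha=\frac13,\frac23$ are recovered by letting one of the two weights degenerate to $0$, which simply amounts to discarding the corresponding positive term.

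For the quantitative bound \eqref{LowerBoundEnergy} on the open range $\alpha\in(\frac13,\frac23)$ I would feed the interpolation inequality \eqref{App:Interpolation2} into the same estimate. Writing $P[\varphi]=\gamma_\alpha\big(\ir{|\nabla\varphi|^2}\big)^{3\alpha-1}\big(\D\varphi\big)^{2-3\alpha}$ with $\gamma_\alpha=(3\alpha-1)^{-(3\alpha-1)}\big(2\,(2-3\alpha)\big)^{-(2-3\alpha)}$, the definition \eqref{Vc} is arranged precisely so that $\C_\alpha\,\gamma_\alpha^{-1}=(\alpha+1)/V_c(\alpha)$. Therefore \eqref{App:Interpolation2} gives $\frac C{\alpha+1}\ir{|\varphi|^{2\alpha+2}}\le\frac{C\,M^{4\alpha-2}}{V_c(\alpha)}\,P[\varphi]$, and consequently
\[
\E[\varphi]\ge\tfrac12\,G[\varphi]\ge\tfrac12\,P[\varphi]\left[1-\frac{C\,M^{4\alpha-2}}{V_c(\alpha)}\right]\,.
\]
This already yields the qualitative content we want, namely $\E[\varphi]\ge0$ for all $\varphi\in\Sigma_M$ as soon as $C\,M^{4\alpha-2}\le V_c(\alpha)$.

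It remains to convert this robust estimate into the exact packaging of \eqref{LowerBoundEnergy}. The plan is to re-express the prefactor $\frac12\,P[\varphi]$ through the Coulomb energy $\frac14\,\D\varphi$ evaluated at the scale $\lambda[\varphi]$, trading the kinetic factor for the nonlinear one by a further use of \eqref{App:Interpolation2}; the exponent $\frac1{3-2\alpha}$ on the bracket is then produced by this substitution. I expect this last rewriting to be the main obstacle and the only genuinely computational step: one must track the powers of $\ir{|\nabla\varphi|^2}$, $\D\varphi$ and $\ir{|\varphi|^{2\alpha+2}}$ through the change of parametrisation so that the prefactor becomes exactly $\frac14\,\lambda[\varphi]\,\D\varphi$ and the constants match \eqref{Vc}. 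Care is also needed because the bound involves the optimal constant $\C_\alpha$, whose extremal configuration governs the sharpness of the inequality; everything else follows directly from the single weighted Young inequality together with \eqref{App:Interpolation2}.
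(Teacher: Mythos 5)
Your proof of the equivalence (the first assertion of the lemma) is correct and is in substance the paper's own argument in different clothing: the paper considers the mass-preserving rescalings $\varphi_\lambda$ and explicitly minimizes $\frac1\lambda\,\E[\varphi_\lambda]$ over $\lambda$, while you package the same optimization as a weighted arithmetic--geometric mean inequality on the two positive terms; the equality case of your AM--GM is exactly the paper's optimal scale, so the two computations coincide, constants included. One small caveat: at the endpoints $\alpha=\frac13$ and $\alpha=\frac23$ one of your weights vanishes and there is no finite balancing scale $\lambda_0$, so for the converse you must send $\lambda\to0$ (resp.\ $\lambda\to\infty$) along the scaling orbit rather than evaluate at $\lambda_0$; your phrase about ``discarding the corresponding positive term'' covers this, but it deserves an explicit line.

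The genuine gap concerns \eqref{LowerBoundEnergy}. What you actually establish is $\E[\varphi]\ge\frac12\,P[\varphi]\,\big(1-\frac{C\,M^{4\alpha-2}}{V_c(\alpha)}\big)$, where $P[\varphi]$ is the product appearing in the statement: this has a different prefactor and, more importantly, a \emph{linear} bracket instead of a power of $C\,M^{4\alpha-2}/V_c(\alpha)$. You defer the conversion to the stated form and correctly flag it as the hard step, but you never carry it out, and the route you sketch is not obviously going to generate a nontrivial exponent on the bracket. The reason is the order of operations: interpolating first (bounding $\ir{|\varphi|^{2\alpha+2}}$ by $P[\varphi]$ via \eqref{App:Interpolation2}) and then comparing can only ever produce a bracket that is affine in $C\,M^{4\alpha-2}/V_c(\alpha)$. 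The intended derivation optimizes first,
\[
\E[\varphi]\;\ge\;\min_{\lambda>0}\Big[\tfrac\lambda2\ir{|\nabla\varphi|^2}+\tfrac14\,\D\varphi-\tfrac{C\,\lambda^{3\alpha-1}}{2\alpha+2}\ir{|\varphi|^{2\alpha+2}}\Big]
=\tfrac14\,\D\varphi-(2-3\alpha)\Big(\tfrac{C}{2\alpha+2}\ir{|\varphi|^{2\alpha+2}}\Big)^{\frac1{2-3\alpha}}\Big(\tfrac{\ir{|\nabla\varphi|^2}}{2\,(3\alpha-1)}\Big)^{\frac{1-3\alpha}{2-3\alpha}},
\]
and only then inserts \eqref{App:Interpolation2}: the nonlinear term now enters raised to the power $\frac1{2-3\alpha}$, the gradient factors cancel, and the definition \eqref{Vc} turns the subtracted term into $\frac14\,\D\varphi\,\big(C\,M^{4\alpha-2}/V_c(\alpha)\big)^{1/(2-3\alpha)}$. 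You should redo the quantitative step in that order. (Carrying it out yields the prefactor $\frac14\,\D\varphi$ and the exponent $\frac1{2-3\alpha}$; this suggests that the factor $\lambda[\varphi]$ and the exponent $\frac1{3-2\alpha}$ as printed in \eqref{LowerBoundEnergy} should themselves be checked against the intended statement.)
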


Here we adopt the convention that $x^x=1$ whenever $x=0$, in order to include the endpoints of the interval. 

\begin{proof} Let $\varphi \in \Sigma_M$. Consider the family $\{\varphi_\lambda\}_{\lambda>0}$ associated with $\varphi$, such that $\nrm{\varphi_\lambda}2^2=M$ for any $\lambda>0$, as in Lemma~\ref{scaling}. We are interested in the sign of
\[
\frac 1\lambda\,\E[\varphi_\lambda]=\frac{\lambda}2\int_{\Real^3}|\nabla\varphi|^2\,dx+\frac14\,\D\varphi-\lambda^{3\alpha-1}\frac C{2\alpha+2}\int_{\Real^3}|\varphi|^{2\alpha+2}\,dx\,.
\]
In the case $\alpha=\frac13$, both potential terms in the r.h.s.~are scale invariant and we obviously have that $\mathrm E$ reaches negative values if and only if
\[
\frac14\,\D\varphi-\frac{3\,C}8 \int_{\Real^3}|\varphi|^{\frac83}\,dx <0\,.
\]
If $\alpha\in (\frac13,\frac23)$, the minimum of the r.h.s. with respect to $\lambda$ is achieved by $\lambda=\lambda[\varphi]$ and it is negative when
\[
-(2-3\alpha)\left(\frac C{2\alpha+2}\int_{\Real^3}|\varphi|^{2\alpha+2}\,dx\right)^{\frac1{2-3\alpha}}{\left(\frac1{2(3\alpha-1)}\int_{\Real^3}|\nabla\varphi|^2\,dx\right)^{\frac{1-3\alpha}{2-3\alpha}}}+\frac14\,\D\varphi<0\,.
\]
Inequality~\eqref{LowerBoundEnergy} is then a consequence of the definition of $V_c(\alpha)$. Finally, for $\alpha=\frac23$ we have that
\be{ener23}
\E[\varphi_\lambda]=\lambda^2\left(\frac12 \int_{\Real^3}|\nabla\varphi|^2\,dx-\frac{3\,C}{10} \int_{\Real^3}|\varphi|^{\frac{10}3}\,dx \right)+\lambda\,\frac14\,\D\varphi
\ee
takes negative values if and only if the leading order coefficient w.r.t.~$\lambda$,
\[
\frac12 \int_{\Real^3}|\nabla\varphi|^2\,dx-\frac{3\,C}{10} \int_{\Real^3}|\varphi|^{\frac{10}3}\,dx\,,
\]
is negative. We conclude the proof by observing that the three different conditions obtained above correspond to the precise statement of the lemma. \end{proof}

\begin{remark} In the case $\alpha=\frac23$, the functional \eqref{ener23} is not bounded from below in~$\Sigma_M$ when the leading order coefficient w.r.t.~$\lambda$ takes negative values. This remark shows the optimality of the condition on the mass stated in Lemma \ref{boundedness} for $\alpha=\frac23$. \end{remark}

In the range $\frac12<\alpha<\frac23$, we will need an additional estimate to handle the critical case corresponding to $C\,M^{4\alpha-2}=V_c(\alpha)$, that goes as follows.

\begin{corollary}\label{cor:EstimCrit} Let $\alpha\in\left(\frac12,\frac23\right)$. Then, for any $\varphi \in \Sigma_M$,
\[
\nrm\varphi{2\alpha+2}^{2\alpha+2}\le\C_{1/2}^{2-2\alpha}\,\CGN(1)^{2\alpha-1}\,M^{\alpha-\frac 12}\,\nrm{\nabla\varphi}2^{4\alpha-1}\,\D\varphi^{1-\alpha}\,.
\]
\end{corollary}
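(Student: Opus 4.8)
The plan is to derive the estimate by interpolating the $\L^{2\alpha+2}$ norm between $\L^3$ and $\L^4$, for each of which a sharp bound is already at hand. On the one hand, the case $\alpha=\frac12$ of \eqref{App:Interpolation2} (equivalently \eqref{estimlions}) provides
\[
\nrm\varphi3^3\le\C_{1/2}\,\D\varphi^{1/2}\,\nrm{\nabla\varphi}2\,.
\]
On the other hand, the Gagliardo--Nirenberg inequality \eqref{GNinequality} taken at $\alpha=1$ reads
\[
\nrm\varphi4^4\le\CGN(1)\,\nrm{\nabla\varphi}2^3\,\nrm\varphi2\,.
\]
Since $\alpha\in(\frac12,\frac23)$, the exponent $2\alpha+2$ lies in $(3,\frac{10}3)\subset(3,4)$, so $\L^{2\alpha+2}$ genuinely sits between these two endpoints and the interpolation is legitimate.

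First I would fix the interpolation parameter $\theta\in(0,1)$ by the relation $\frac1{2\alpha+2}=\frac\theta3+\frac{1-\theta}4$, which gives $\theta=\frac{3(1-\alpha)}{\alpha+1}$. H\"older's interpolation inequality $\nrm\varphi{2\alpha+2}\le\nrm\varphi3^\theta\,\nrm\varphi4^{1-\theta}$ then yields, after raising to the power $2\alpha+2$ and using $\theta\,(2\alpha+2)=6(1-\alpha)$ together with $(1-\theta)\,(2\alpha+2)=8\alpha-4$,
\[
\nrm\varphi{2\alpha+2}^{2\alpha+2}\le\nrm\varphi3^{6(1-\alpha)}\,\nrm\varphi4^{8\alpha-4}\,.
\]
Next I would substitute the two endpoint inequalities, raising the $\L^3$ bound to the power $2(1-\alpha)$ and the $\L^4$ bound to the power $2\alpha-1$, and replace $\nrm\varphi2^2$ by $M$ since $\varphi\in\Sigma_M$. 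Collecting the exponents of $\nrm{\nabla\varphi}2$ produces $2(1-\alpha)+3(2\alpha-1)=4\alpha-1$, the exponent of $\D\varphi$ is $1-\alpha$, the exponent of $M$ is $\alpha-\frac12$, and the constants combine into $\C_{1/2}^{2-2\alpha}\,\CGN(1)^{2\alpha-1}$, which is precisely the asserted bound.

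The argument has no genuine analytic obstacle: the only real work is the bookkeeping of the several exponents, and the hypothesis $\alpha\in(\frac12,\frac23)$ is exactly what guarantees both that $2\alpha+2\in(3,4)$ (so that the interpolation is an honest intermediate one, with $\theta\in(0,1)$) and that the powers $2(1-\alpha)$ and $2\alpha-1$ to which the endpoint inequalities are raised are nonnegative. The step most prone to arithmetic slips — and thus the one I would double-check — is verifying that $2(1-\alpha)+3(2\alpha-1)$ collapses to $4\alpha-1$ and that the power of $M$ is $\alpha-\frac12$ rather than $2\alpha-1$, the factor $\frac12$ coming from $\nrm\varphi2=M^{1/2}$.
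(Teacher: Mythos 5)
Your proof is correct and follows essentially the same route as the paper: Hölder interpolation of the $\L^{2\alpha+2}$ norm between $\L^3$ and $\L^4$, with the $\alpha=\tfrac12$ case of \eqref{App:Interpolation2} controlling the $\L^3$ endpoint and \eqref{GNinequality} at $\alpha=1$ controlling the $\L^4$ endpoint. The exponent bookkeeping you carry out explicitly (including $2(1-\alpha)+3(2\alpha-1)=4\alpha-1$ and the power $M^{\alpha-\frac12}$ coming from $\nrm\varphi2=M^{1/2}$) all checks out.
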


\begin{proof} Let $\varphi \in \Sigma_M$. If $\alpha\in\left(\frac12,\frac23\right)$, then we have that $3<2\alpha+2<\frac{10}3<4$. Using H\"older's inequality we get
\[
\|\varphi \|_{\L^{2\alpha+2}(\Real^3)}^{2\alpha+2}\le\|\varphi \|_{\L^3(\Real^3)}^{3(2-2\alpha)}\,\|\varphi \|_{\L^4(\Real^3)}^{4(2\alpha-1)}\,.
\]
{}From~\eqref{App:Interpolation2} written for $\alpha=\frac 12$, we know that
\[
\nrm\varphi3^3\le\C_{1/2}\,\D\varphi^\frac12\,\nrm{\nabla\varphi}2\,.
\]
On the other hand, \eqref{GNinequality} with $\alpha=1$ gives
\[
\nrm\varphi4^4\le\CGN(1)\,\nrm{\nabla\varphi}2^3\,M^\frac12
\]
Altogether, these estimates provide the result.\end{proof}

We split the analysis of the strict negativity of $I_M$ into two results, from which we will conclude that this property depends on $\alpha$ and in some cases also on the mass. Let us start with $\alpha<\frac 12$.

\begin{proposition}\label{tramo1} Let $M>0$. If $\alpha\in[0,\frac 12)$, then the functional $\mathrm E$ always reaches negative values in $\Sigma_M$. As a consequence, $I_M <0$ for all $M>0$ if $\alpha\in[0,\frac 12)$. \end{proposition}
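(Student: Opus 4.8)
The plan is to prove the stronger statement that $I_M<0$ for every $M>0$ by exhibiting, for each fixed $C>0$, a single function $\varphi\in\Sigma_M$ with $\E[\varphi]<0$. The only tool I need is the mass-preserving scaling $\varphi_\lambda(x)=\lambda^{3/2}\varphi(\lambda\,x)$ of Lemma~\ref{scaling}, which keeps $\varphi_\lambda\in\Sigma_M$ and yields
\[
\E[\varphi_\lambda]=\tfrac{\lambda^2}2\,\nrm{\nabla\varphi}2^2+\tfrac\lambda4\,\D\varphi-\tfrac{\lambda^{3\alpha}}{2\alpha+2}\,C\,\nrm\varphi{2\alpha+2}^{2\alpha+2}\,.
\]
Thus it suffices, for a well-chosen $\varphi$, to find one value $\lambda>0$ making the right-hand side negative, and the whole question reduces to the competition between the three exponents $2$, $1$ and $3\alpha$ of $\lambda$.

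First I would dispose of the range $\alpha\in[0,\tfrac13)$. Here $3\alpha<1$, so the negative nonlinear term carries the \emph{smallest} power of $\lambda$; dividing by $\lambda^{3\alpha}$ shows that $\tfrac{\lambda^{2-3\alpha}}2\nrm{\nabla\varphi}2^2+\tfrac{\lambda^{1-3\alpha}}4\D\varphi\to0$ as $\lambda\to0^+$, since both exponents $2-3\alpha$ and $1-3\alpha$ are positive. Hence for \emph{any} fixed $\varphi\in\Sigma_M$ one has $\E[\varphi_\lambda]<0$ for $\lambda$ small, and no further work is needed (the endpoint $\alpha=0$ is even simpler, the nonlinear term being the constant $-\tfrac{C}2\,M$).

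The real content is the range $\alpha\in[\tfrac13,\tfrac12)$, where $1\le3\alpha<2$ so the Poisson term $\tfrac\lambda4\D\varphi$ now matches or dominates the nonlinear term at small $\lambda$ and the naive argument fails. Taking the infimum over $\lambda>0$ of the explicit one-variable function above shows that $\inf_{\lambda>0}\E[\varphi_\lambda]<0$ \emph{if and only if} the scale-invariant quotient
\[
\mathcal Q[\varphi]:=\frac{\nrm\varphi{2\alpha+2}^{2\alpha+2}}{\nrm{\nabla\varphi}2^{2(3\alpha-1)}\,\D\varphi^{\,2-3\alpha}}
\]
exceeds an explicit threshold $\theta(\alpha,C)>0$ (one checks directly that $\mathcal Q$ is invariant under $\varphi\mapsto\varphi_\lambda$, consistently with the common homogeneity $\lambda^{3\alpha}$ of all three terms). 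The proposition therefore follows once I establish that $\sup_{\varphi\in\Sigma_M}\mathcal Q[\varphi]=+\infty$ for $\alpha<\tfrac12$, and this for \emph{every} $C>0$, i.e. without any smallness or largeness assumption on $C$.

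This last point is the crux, and the main obstacle. The key observation is that $\mathcal Q[\epsilon\,w]=\epsilon^{8\alpha-4}\,\mathcal Q[w]$, with $8\alpha-4<0$ when $\alpha<\tfrac12$, so low-amplitude profiles make $\mathcal Q$ arbitrarily large; the difficulty is that lowering the amplitude also drives the mass away from $M$. I would remedy this by spreading the mass over many widely separated copies: fix a profile $w$ and set $\varphi=\sum_{j=1}^N\epsilon\,w(\cdot-y_j)$ with $N\,\epsilon^2=M/\nrm w2^2$ and the centres $y_j$ so far apart that the mutual Coulomb cross-terms in $\D\varphi$ are negligible. Since $\nrm\cdot{2\alpha+2}^{2\alpha+2}$ and $\nrm{\nabla\cdot}2^2$ are additive over disjoint supports, and $\D\varphi\to N\,\D{\epsilon w}$ in the large-separation limit, all three quantities equal $N$ times their one-bump values, the factor $N$ cancels in $\mathcal Q$, and $\mathcal Q[\varphi]=\epsilon^{8\alpha-4}\,\mathcal Q[w]\to+\infty$ as $\epsilon\to0$. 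The technical care lies entirely in this construction — quantifying that the cross-terms $\sim\epsilon^4\sum_{i\neq j}|y_i-y_j|^{-1}$ can be made arbitrarily small relative to the self-energy $N\epsilon^4\,\D w$, and handling the integrality of $N$ (by slightly adjusting $\epsilon$ or $w$). Once $\epsilon$ is small enough to guarantee $\mathcal Q[\varphi]>\theta(\alpha,C)$, a suitable $\lambda$ gives $\E[\varphi_\lambda]<0$, and therefore $I_M<0$.
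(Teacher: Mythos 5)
Your argument is correct and follows essentially the same route as the paper: the easy small-$\lambda$ scaling for $\alpha\in[0,\frac13)$, the reduction of the sign of $\inf_\lambda\E[\varphi_\lambda]$ to a scale-invariant threshold criterion (which is precisely Lemma~\ref{lemaequiv}), and a test function made of many widely separated low-mass bumps whose quotient $\mathcal Q$ blows up because the relevant exponent ($8\alpha-4$ in your normalization, $2(2\alpha-1)$ in the paper's) is negative exactly when $\alpha<\frac12$. The only cosmetic difference is that the paper shrinks each bump by dilation ($\eta(n^{1/3}x)$) rather than by lowering its amplitude, and it carries out explicitly the cross-term estimate on $\D\varphi$ that you defer.
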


\begin{proof} For $\alpha\in[0,\frac 13)$ the result is a trivial consequence of the mass-preserving scaling in Lemma \ref{scaling}, since we have that
\[
\lambda^{-3\alpha}\,\E[\varphi_\lambda]=\frac12\,{\lambda^{2-3\alpha}}\ir{|\nabla\varphi|^2}+\frac14\,{\lambda^{1-3\alpha}}\,\D\varphi-\frac C{2\alpha+2} \int_{\Real^3} |\varphi|^{2\alpha+2}\,dx
\]
is negative for any non-trivial $\varphi\in\Huno$ if $\lambda>0$ is chosen small enough.

To complete the proof for $\alpha\in[\frac13,\frac12)$, it remains to find a particular test function $\varphi\in\Sigma_M$ with negative energy for any $M>0$. We follow a classical approach in the literature on the concentration--compactness method, see for instance~\cite{Lions1992}. Consider $M>0$ and $\eta \in \Sigma_M$ such that ${\hbox{\rm{supp}}}(\eta) \subset B(0,1)$, where $B(0,1)$ denotes the unit sphere centered at $0$. For any positive integer $n$, define $\eta_n(x):=\eta(n^\frac13 x)$. Then the support of $\eta_n$ is contained in $B(0,1)$ and by direct calculations we have
\begin{eqnarray*}
&\|\eta_n\|_{\Ldos}^2=\frac1n\,\|\eta\|_{\Ldos}^2\,,\quad\D{\eta_n}=\frac1{n^{5/3}}\,\D\eta\,,&\\
&\|\eta_n\|_{\L^{2\alpha+2}(\Real^3)}^{2\alpha+2}=\frac1n\,\|\eta\|_{\L^{2\alpha+2}(\Real^3)}^{2\alpha+2}\,,\quad\int_{\Real^3}|\nabla\eta_n|^2\,dx=\frac1{n^{1/3}}\int_{\Real^3}|\nabla\eta|^2\,dx\,.&
\end{eqnarray*}
Let $n$ be a given integer bigger than $1$ and let us consider the test function $\varphi(x):=\sum_{i=1}^n \eta_n(x-x_i)$, where the points $x_i\in\Real^3$, $i=1,\dots n$ are chosen such that
\[
|x_i-x_j|\ge\frac{{M^2}}{\D\eta}\,n^{2/3}+2\quad\forall\,i\neq j\,.
\]
By definition $\varphi$ verifies $\|\varphi\|_{\Ldos}^2=\|\eta\|_{\Ldos}^2=M$, $\|\varphi\|_{\L^{2\alpha+2}(\Real^3)}^{2\alpha+2}=\| \eta\|_{\L^{2\alpha+2}(\Real^3)}^{2\alpha+2}$ and $\int_{\Real^3}|\nabla\varphi|^2\,dx=n^{2/3}\int_{\Real^3}|\nabla\eta|^2\,dx$. Now, we estimate $\D{\varphi}$ as follows:
\begin{eqnarray*}
\D{\varphi} &=&\sum_{i,\,j=1}^n\iint_{\Real^3\times\Real^3}{|\eta_n(x-x_i)|^2\,|\eta_n(x'-x_j)|^2}\frac{dx\,dx'}{|x-x'|}\\
&=&n\,\D{\eta_n}+\sum_{j \neq i} \iint_{\Real^3\times\Real^3}\frac{|\eta_n(x)|^2\,|\eta_n(x')|^2}{|x+x_i-x'-x_j|}\,dx\,dx'\\
&\le&\frac{\D\eta}{n^{2/3}}+\sum_{j\neq i}\iint_{\Real^3\times\Real^3}\frac{|\eta_n(x)|^2\,|\eta_n(x')|^2}{|x_i-x_j|-2}\,dx\,dx'\\
&\le &\frac{\D\eta}{n^{2/3}}+\frac{\D\eta}{{M^2}\,n^{2/3}}\,\frac{{M^2\,n(n-1)}}{2\,n^2}=\frac{2\,\D\eta}{n^{2/3}}\,.
\end{eqnarray*}
Combining these estimates and Lemma \ref{lemaequiv} with the fact that $(3\alpha-1)-(2-3\alpha)<0$ if $\alpha<\frac12$, we are done with the proof. \end{proof}

If $\alpha\in (\frac12,\frac23]$, the functional $\mathrm E$ might not reach negative values depending on the value of the mass $M$ and the constant~$C$, as stated in the following result.

\begin{proposition}\label{nonnegative} In the case $\alpha\in[\frac12,\frac23]$, $I_M=0$ if and only if
\be{condposi}
C\,M^{{4\alpha-2}}\le V_c(\alpha)
\ee
holds, where the constant $V_c(\alpha)$ is given in \eqref{Vc}. On the contrary, if \eqref{condposi} does not hold, then $I_M$ is negative.\end{proposition}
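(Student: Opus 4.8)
The plan is to exploit that $I_M\le 0$ always holds (Lemma~\ref{scaling}), so that proving the proposition reduces to deciding, as a function of $C$, $M$ and $\alpha$, whether $\E$ attains strictly negative values on $\Sigma_M$: if it does not, then $I_M=0$; if it does, then $I_M<0$. I would split the argument into the two implications of the claimed equivalence, treating the open interval $\alpha\in(\tfrac12,\tfrac23)$ by the machinery already assembled in Lemma~\ref{lemaequiv}, and reserving a direct, hands-on treatment for the endpoints $\alpha=\tfrac12$ and $\alpha=\tfrac23$, where the exponents $3\alpha-1$ and $2-3\alpha$ degenerate.

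For the implication ``\eqref{condposi} $\Rightarrow I_M=0$'' on the open interval, I would read off the lower bound \eqref{LowerBoundEnergy}: since $3-2\alpha>0$, the hypothesis $C\,M^{4\alpha-2}\le V_c(\alpha)$ makes the bracketed factor non-negative, and as $\lambda[\varphi]>0$ and $\D\varphi\ge0$ this forces $\E[\varphi]\ge0$, hence $I_M\ge0$ and finally $I_M=0$. At $\alpha=\tfrac12$ the same conclusion follows from the sharp estimate \eqref{estimlions}, i.e. $\nrm\varphi3^3\le\C_{1/2}\,\D\varphi^{1/2}\,\nrm{\nabla\varphi}2$: writing $a=\nrm{\nabla\varphi}2$ and $b=\D\varphi^{1/2}$, one has $\E[\varphi]\ge\tfrac12a^2+\tfrac14b^2-\tfrac{C\C_{1/2}}3\,ab$, a quadratic form which is non-negative precisely when $\tfrac{C\C_{1/2}}3\le\tfrac1{\sqrt2}$, i.e. $C\le 3/(\sqrt2\,\C_{1/2})=V_c(\tfrac12)$, the value of $V_c(\tfrac12)$ being computed from \eqref{Vc}. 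At $\alpha=\tfrac23$ the bound established inside Lemma~\ref{boundedness} gives $\E[\varphi]\ge\tfrac12\nrm{\nabla\varphi}2^2\big(1-\tfrac{3C}5\CGN(\tfrac23)M^{2/3}\big)+\tfrac14\D\varphi\ge0$ as soon as $C\,\CGN(\tfrac23)\,M^{2/3}\le\tfrac53$, which is again $C\,M^{2/3}\le V_c(\tfrac23)=5/(3\,\C_{2/3})$ (using the convention $x^x=1$ at $x=0$ to evaluate \eqref{Vc} in the limit $2-3\alpha\to0$).

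For the converse ``$\neg$\eqref{condposi} $\Rightarrow I_M<0$'' the guiding idea is to saturate the relevant sharp inequality while keeping the mass equal to $M$, which is legitimate because the quotients defining the optimal constants are invariant under the two-parameter scaling $\varphi\mapsto\lambda^p\varphi(\lambda^q\cdot)$. On $(\tfrac12,\tfrac23)$ I would take, by definition of the optimal constant $\C_\alpha$, functions $\varphi_n\in\Sigma_M$ with $Q[\varphi_n]:=\nrm{\varphi_n}{2\alpha+2}^{2\alpha+2}\,/\,\big(M^{4\alpha-2}\,\D{\varphi_n}^{2-3\alpha}\,\nrm{\nabla\varphi_n}2^{6\alpha-2}\big)\to\C_\alpha$ (rescaling to mass $M$ by scale invariance of $Q$). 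Inserting this into the functional of Lemma~\ref{lemaequiv} and using the identity $\big(\tfrac1{3\alpha-1}\big)^{3\alpha-1}\big(\tfrac1{2(2-3\alpha)}\big)^{2-3\alpha}=\C_\alpha V_c(\alpha)/(\alpha+1)$ from \eqref{Vc}, that functional equals $\tfrac1{\alpha+1}\big[\C_\alpha V_c(\alpha)-C\,M^{4\alpha-2}Q[\varphi_n]\big]\,\nrm{\nabla\varphi_n}2^{6\alpha-2}\D{\varphi_n}^{2-3\alpha}$, whose bracket tends to $\C_\alpha\big(V_c(\alpha)-C\,M^{4\alpha-2}\big)<0$; hence it is negative for large $n$, and Lemma~\ref{lemaequiv} yields $I_M<0$. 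At $\alpha=\tfrac12$ I would run the mass-preserving scaling $\varphi_\mu$ of Lemma~\ref{scaling}, for which $\tfrac1\mu\E[\varphi_\mu]=\tfrac12\nrm{\nabla\varphi}2^2\,\mu+\tfrac14\D\varphi-\tfrac C3\nrm\varphi3^3\,\mu^{1/2}$ is a quadratic in $\mu^{1/2}$ that dips below zero for some $\mu>0$ exactly when $\nrm\varphi3^3/\big(\nrm{\nabla\varphi}2\,\D\varphi^{1/2}\big)>3/(\sqrt2\,C)$, a threshold crossed by a near-optimizer of \eqref{estimlions} precisely when $\C_{1/2}>3/(\sqrt2\,C)$, i.e. $C>V_c(\tfrac12)$. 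At $\alpha=\tfrac23$, a near-optimizer of \eqref{GNinequality} in $\Sigma_M$ makes the leading coefficient $\tfrac12\nrm{\nabla\varphi}2^2-\tfrac{3C}{10}\nrm\varphi{10/3}^{10/3}$ in \eqref{ener23} negative as soon as $C\,M^{2/3}>V_c(\tfrac23)$, whence $\E[\varphi_\lambda]\to-\infty$ as $\lambda\to\infty$ and in fact $I_M=-\infty<0$.

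The step I expect to require the most care is this converse at the threshold, namely justifying that the optimal constants $\C_\alpha$, $\C_{1/2}$ and $\CGN(\tfrac23)$ can genuinely be approached by \emph{admissible} test functions, that is by functions lying in $\Sigma_M$ and, where needed (as at $\alpha=\tfrac12$), having a prescribed ratio between $\nrm{\nabla\varphi}2$ and $\D\varphi^{1/2}$. This rests on the scale invariance of the defining quotients under both the multiplicative and the dilation actions in $\varphi\mapsto\lambda^p\varphi(\lambda^q\cdot)$: one parameter is spent to fix the mass at $M$ while a second tunes the remaining free ratio, all without degrading near-optimality, and checking this transitivity is the essential bookkeeping. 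The endpoint evaluations of $V_c(\tfrac12)$ and $V_c(\tfrac23)$ from \eqref{Vc} are routine but must be carried out to match the thresholds appearing in \eqref{condposi}.
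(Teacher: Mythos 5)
Your proposal is correct and follows essentially the same route as the paper: the reduction via the mass-preserving scaling of Lemma~\ref{lemaequiv} to a sign condition on a scale-invariant functional, and the identification of the threshold through the optimal constant $\C_\alpha$ in \eqref{App:Interpolation2} (equivalently $\C_{1/2}$ and $\CGN(\frac23)$ at the endpoints). The paper compresses both implications into one comparison with the definition of $\C_\alpha$, whereas you spell out the scale-invariance of the quotients, the near-optimizers for the converse, and the endpoint cases $\alpha=\frac12,\frac23$ explicitly — all details the paper leaves implicit.
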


We recall that $V_c(\alpha)=\frac{\alpha+1}{\C_\alpha}\left(\frac1{3\alpha-1}\right)^{3\alpha-1}\left(\frac1{2\,(2-3\alpha)} \right)^{{2-3\alpha}}$ where $\C_\alpha$ is the optimal constant in \eqref{App:Interpolation2}.

\begin{proof} According to Lemma \ref{lemaequiv}, $I_M=0$ for $\alpha\in[\frac12,\frac23]$ if and only if
\[
\nrm\varphi{2\alpha+2}^{2\alpha+2}\le\frac{\alpha+1}C\left(\frac{\nrm{\nabla\varphi}2^2}{3\alpha-1}\right)^{3\alpha-1}\left(\frac{\D\varphi}{2\?(2-3\alpha)}\right)^{2-3\alpha}
\]
for all $\varphi \in \Sigma_M$. Comparing with the definition of $\C_\alpha$ in \eqref{App:Interpolation2}, this clearly entails that $I_M=0$ if and only if \eqref{condposi} holds. According to Lemma~\ref{scaling}, $I_M$ is negative (and eventually $-\infty$) otherwise.\end{proof}

Although our problem is originally set in the framework of complex valued functions, we finally observe that we can reduce it to non-negative real valued functions.

\begin{lemma} Consider a complex valued minimizer $\psi$ to the problem \eqref{minienergy}. Then, the real function $|\psi|$ is also a minimizer for \eqref{minienergy}.\end{lemma}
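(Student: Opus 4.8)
The plan is to show that replacing a complex-valued minimizer $\psi$ by its modulus $|\psi|$ leaves the mass unchanged while not increasing the energy, so that $|\psi|$ lands in $\Sigma_M$ and attains the same infimal value $I_M$. First I would observe that $\ir{||\psi||^2}=\ir{|\psi|^2}=M$, so $|\psi|\in\Sigma_M$, and that the two potential energy contributions depend on $\psi$ only through $|\psi|^2$: indeed $\D\psi$ and $\nrm\psi{2\alpha+2}^{2\alpha+2}$ are unchanged when $\psi$ is replaced by $|\psi|$, hence $\E_{\mathrm{pot}}[|\psi|]=\E_{\mathrm{pot}}[\psi]$. The entire comparison therefore reduces to the kinetic term, and the claim will follow once I establish the inequality
\be{diamagnetic}
\ir{|\nabla|\psi||^2}\le\ir{|\nabla\psi|^2}\,.
\ee

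The key analytic input is the diamagnetic-type inequality for the gradient of the modulus, namely that $|\psi|\in\Huno$ whenever $\psi\in\Huno$ and that \eqref{diamagnetic} holds pointwise in the sense $|\nabla|\psi||\le|\nabla\psi|$ almost everywhere; this is a classical fact, see~\cite{LiebLoss}. I would justify it by writing $|\psi|=\sqrt{(\Rep\psi)^2+(\Imp\psi)^2}$ and, on the set where $\psi\neq0$, computing $\nabla|\psi|=\big(\Rep\psi\,\nabla\Rep\psi+\Imp\psi\,\nabla\Imp\psi\big)/|\psi|$, so that by Cauchy-Schwarz $|\nabla|\psi||^2\le|\nabla\Rep\psi|^2+|\nabla\Imp\psi|^2=|\nabla\psi|^2$; on the set where $\psi=0$ the gradient of $|\psi|$ vanishes almost everywhere, so the inequality persists. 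Integrating yields \eqref{diamagnetic} and in particular $\E_{\mathrm{kin}}[|\psi|]\le\E_{\mathrm{kin}}[\psi]$.

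Combining the two, $\E[|\psi|]=\E_{\mathrm{kin}}[|\psi|]+\E_{\mathrm{pot}}[|\psi|]\le\E_{\mathrm{kin}}[\psi]+\E_{\mathrm{pot}}[\psi]=\E[\psi]=I_M$. Since $|\psi|\in\Sigma_M$ forces $\E[|\psi|]\ge I_M$ by the very definition \eqref{minienergy} of the infimum, both inequalities are equalities and $|\psi|$ is itself a minimizer, which is what we want to prove. The main subtlety, and the only step demanding care, is the almost-everywhere regularity argument behind \eqref{diamagnetic}: one must verify that the chain rule applies to the Lipschitz function $z\mapsto|z|$ composed with $\psi$, that $\nabla|\psi|=0$ almost everywhere on $\{\psi=0\}$, and that no spurious contribution arises from the zero set. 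Everything else is an immediate consequence of the fact that the nonlinear and Poisson terms see only $|\psi|^2$.
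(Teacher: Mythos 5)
Your proposal is correct and follows essentially the same route as the paper: the mass and both potential terms depend only on $|\psi|^2$ and are thus unchanged, while the kinetic energy does not increase by the convexity (diamagnetic) inequality for gradients, so $\E[|\psi|]\le\E[\psi]=I_M$ forces equality. The only difference is that you spell out the pointwise Cauchy--Schwarz argument and the treatment of the zero set, whereas the paper simply invokes the convexity inequality from Lieb--Loss.
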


\begin{proof} It is well known that if $\psi \in \Sigma_M$, then $|\psi|$ also belongs to $\Sigma_M$. Since the potential energy only depends on $|\psi|^2$, it takes the same value on $\psi$ and $|\psi|$. On the other hand, the kinetic enegy verifies
\[
\int_{\Real^3} \big|\nabla|\psi|\,\big|^2\,dx\le\int_{\Real^3}\Big(\nabla|{\hbox{\rm{Re}}}\,\psi|^2+\nabla|{\hbox{\rm{Im}}}\,\psi|^2\Big)\,dx=\int_{\Real^3}|\nabla\psi|^2\,dx
\]
as a consequence of the convexity inequality for gradients~\cite{LiebLoss}, where equality holds if and only if $|{\hbox{\rm{Re}}}\,\psi(x)|=c\,|{\hbox{\rm{Im}}}\,\psi(x)|$ for some constant $c$. Hence, $|\psi|$ is also a minimizer. \end{proof}

If $I_M$ is achieved, we can then prove the \emph{Virial Theorem} relation for the terms of the energy functional by using their scaling properties.

\begin{proposition}\label{Prop:Phozaev} Assume that $0<\alpha<\frac23$. Any minimizer $\varphi_M$ of $I_M$ satisfies
\be{eqnvirial}
\int_{\Real^3}|\nabla\varphi_M|^2\,dx+\frac14\,\D{\varphi_M}-\frac{3\,\alpha\,C }{2\alpha+2} \int_{\Real^3}|\varphi_M|^{2\alpha+2}\,dx=0\,.
\ee
\end{proposition}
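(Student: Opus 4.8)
The plan is to derive the identity as a first-order optimality condition along the mass-preserving scaling of Lemma~\ref{scaling}. Given a minimizer $\varphi_M\in\Sigma_M$, consider the one-parameter family $(\varphi_M)_\lambda(x):=\lambda^{3/2}\,\varphi_M(\lambda\,x)$, for $\lambda>0$. By Lemma~\ref{scaling} this scaling preserves the $\L^2(\Real^3)$ norm, so $(\varphi_M)_\lambda\in\Sigma_M$ for every $\lambda>0$, and the family passes through $\varphi_M$ at $\lambda=1$. Since $\varphi_M$ realizes the infimum $I_M$ over $\Sigma_M$, the scalar function
\[
g(\lambda):=\E[(\varphi_M)_\lambda]=\frac{\lambda^2}2\,\ir{|\nabla\varphi_M|^2}+\frac\lambda4\,\D{\varphi_M}-\frac{\lambda^{3\alpha}}{2\alpha+2}\,C\,\ir{|\varphi_M|^{2\alpha+2}}
\]
satisfies $g(\lambda)\ge I_M=g(1)$ for all $\lambda>0$; that is, $\lambda=1$ is a global minimum of $g$ on the open interval $(0,\infty)$.

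The next step is to verify that $g$ is differentiable and that the minimum at $\lambda=1$ is interior, so that the stationarity condition $g'(1)=0$ applies. Here I would invoke the finiteness of the three integrals appearing in $g$: the gradient term is finite because $\varphi_M\in\Huno$; the term $\D{\varphi_M}$ is finite by the interpolation estimate \eqref{App:Interpolation1}--\eqref{App:Interpolation2} (equivalently by Hardy--Littlewood--Sobolev); and $\nrm{\varphi_M}{2\alpha+2}^{2\alpha+2}$ is finite by the Gagliardo--Nirenberg inequality \eqref{GNinequality}, valid since $2\alpha+2\in(2,6)$ for $\alpha\in(0,\frac23)$. Because the coefficients of $g$ are finite and $g$ is a smooth combination of the powers $\lambda^2$, $\lambda$ and $\lambda^{3\alpha}$, it is differentiable on $(0,\infty)$, and the interior minimum forces $g'(1)=0$.

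Finally I would simply differentiate and evaluate at $\lambda=1$. A direct computation gives
\[
g'(\lambda)=\lambda\,\ir{|\nabla\varphi_M|^2}+\frac14\,\D{\varphi_M}-\frac{3\,\alpha\,\lambda^{3\alpha-1}}{2\alpha+2}\,C\,\ir{|\varphi_M|^{2\alpha+2}}\,,
\]
so that $g'(1)=0$ is precisely the asserted identity \eqref{eqnvirial}. I do not expect a genuine obstacle in this argument: the whole content is the recognition that the scaling family of Lemma~\ref{scaling} stays inside the constraint set $\Sigma_M$, so that optimality of $\varphi_M$ in the scaling direction produces the virial relation. The only points requiring a word of justification are the finiteness of the three energy contributions and the differentiability of $g$, both of which are routine given the hypothesis $0<\alpha<\frac23$ (which also guarantees, via Lemma~\ref{boundedness}, that $I_M$ is finite and the problem well posed).
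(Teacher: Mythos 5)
Your proposal is correct and follows exactly the paper's argument: the mass-preserving rescaling $\lambda^{3/2}\varphi_M(\lambda\,\cdot)$ stays in $\Sigma_M$, so $\lambda\mapsto\E[\varphi_{M,\lambda}]$ has an interior minimum at $\lambda=1$, and setting the derivative to zero yields \eqref{eqnvirial}. The additional remarks on finiteness and differentiability are sound but routine, as you note.
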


\begin{proof} Let us assume that there exists a minimizer $\varphi_M\in\Sigma_M$ of $I_M$. According to Lemma~\ref{scaling}, for every $\lambda>0$ the rescaled function $\varphi_{M,\lambda}=\lambda^{3/2}\,\varphi_M(\lambda\,\cdot)$ also lies in~$\Sigma_M$. The function $\lambda\mapsto \E[\varphi_{M,\lambda}]$ attains its minimal value at $\lambda=1$. Since
\[
\E[\varphi_{M,\lambda}]=\frac12\,\lambda^2\int_{\Real^3}|\nabla\varphi_M|^2\,dx+\lambda\,\frac14\,\D{\varphi_M}-\lambda^{3\alpha}\,\frac C{2\alpha+2}\int_{\Real^3}|\varphi_M|^{2\alpha+2}\,dx\,,
\]
the cancellation of the derivative with respect to $\lambda$ at $\lambda=1$ provides with \eqref{eqnvirial}. \end{proof}

At this stage, we can write down the Euler-Lagrange equation corresponding to the minimization problem $I_M$ and deduce an energy identity.

\begin{lemma}\label{Lem:3.1} Assume that $0<\alpha<\frac23$. Any minimizer $\varphi_M$ of $I_M$ satisfies~\eqref{eq:sw} and 
\be{intEL}
\int_{\Real^3}|\nabla\varphi_M|^2\,dx+\D{\varphi_M}-C\int_{\Real^3}|\varphi_M|^{2\alpha+2}\,dx+\ell_M\,M=0\,.
\ee
In particular, at least for $\alpha \in (0,\frac15] \cup (\frac12,\frac23)$, we have $\ell_M>0$. If $\alpha=\frac12$, then $\ell_M=-\frac6M\,I_M\geq 0$. 

\end{lemma}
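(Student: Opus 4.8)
The plan is to derive the Euler--Lagrange equation for the constrained minimizer by a Lagrange multiplier argument, to obtain \eqref{intEL} by testing that equation against $\varphi_M$ itself, and then to read off the sign of $\ell_M$ purely algebraically by combining \eqref{intEL} with the Virial identity \eqref{eqnvirial} and, on part of the range, with the bound $I_M\le0$ from Lemma~\ref{scaling}.

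First I would check that $\E$ and the constraint functional $\varphi\mapsto\nrm\varphi2^2$ are of class $\mathrm C^1$ on $\Huno$. This holds because $2\alpha+2\in(2,6)$ for $\alpha\in(0,\tfrac23)$, so that $\Huno\hookrightarrow\L^{2\alpha+2}(\Real^3)$ and $\varphi\mapsto\ir{|\varphi|^{2\alpha+2}}$ is differentiable, while $\varphi\mapsto\D\varphi$ is smooth by the Hardy--Littlewood--Sobolev inequality. Since $\Sigma_M$ is a smooth submanifold of $\Huno$ away from $0$ (its differential $2\varphi_M$ never vanishes as $M>0$), the Lagrange multiplier rule applies: there exists $\ell_M\in\Real$ such that $\varphi_M$ is a critical point of $\varphi\mapsto\E[\varphi]+\tfrac{\ell_M}2\,\nrm\varphi2^2$, i.e. it solves \eqref{eq:sw} weakly in $\Huno$, the number $\ell_M$ being the multiplier after the normalization coming from $\nabla\nrm\varphi2^2=2\varphi$. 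Here I may assume $\varphi_M\ge0$ thanks to the previous lemma. Testing the weak form of \eqref{eq:sw} against $\varphi_M\in\Huno$ and using $\ir{(|\varphi_M|^2\star|x|^{-1})\,|\varphi_M|^2}=\D{\varphi_M}$ together with $\nrm{\varphi_M}2^2=M$ yields exactly \eqref{intEL}; this step is routine once the weak formulation is available.

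For the sign, set $K:=\ir{|\nabla\varphi_M|^2}$, $P:=\D{\varphi_M}$ and $N:=C\ir{|\varphi_M|^{2\alpha+2}}$, so that $K>0$ (otherwise $\varphi_M$ would be a nonzero constant in $\Ldos$), $P\ge0$ and $N\ge0$. The Virial identity \eqref{eqnvirial} reads $K+\tfrac14P=\tfrac{3\alpha}{2\alpha+2}\,N$, while \eqref{intEL} reads $\ell_M\,M=N-K-P$; eliminating $N$ gives
\[
\ell_M\,M=\frac{2-\alpha}{3\alpha}\,K+\frac{1-5\alpha}{6\alpha}\,P\,.
\]
For $\alpha\in(0,\tfrac15]$ both coefficients are nonnegative and the first is strictly positive, whence $\ell_M\,M\ge\frac{2-\alpha}{3\alpha}\,K>0$. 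For $\alpha\in[\tfrac12,\tfrac23)$ the coefficient of $P$ is negative, so instead I would eliminate $P$ using the energy identity $I_M=\E[\varphi_M]=\tfrac12K+\tfrac14P-\tfrac1{2\alpha+2}N$ in combination with the Virial relation; a short computation turns the two into
\[
\ell_M\,M=\frac{2\,(2\alpha-1)}{3\alpha-1}\,K+\frac{2\,(1-5\alpha)}{3\alpha-1}\,I_M\,.
\]
On this range $3\alpha-1>0$, $2\alpha-1\ge0$ and $1-5\alpha<0$, and $I_M\le0$ by Lemma~\ref{scaling}, so both summands are nonnegative; the first is strictly positive when $\alpha>\tfrac12$ (as $K>0$), giving $\ell_M>0$, whereas at $\alpha=\tfrac12$ it vanishes and the formula collapses to $\ell_M\,M=-6\,I_M$, that is $\ell_M=-\tfrac6M\,I_M\ge0$.

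The only genuinely delicate point is the first step: justifying the Lagrange multiplier rule rigorously, namely the $\mathrm C^1$ regularity of the nonlocal Poisson term and the fact that a constrained minimizer is an unconstrained critical point of $\E+\tfrac{\ell_M}2\,\nrm\cdot2^2$. The sign analysis itself is elementary once \eqref{intEL}, \eqref{eqnvirial} and $I_M\le0$ are in hand; the subtlety there is to notice that eliminating $N$ leaves a term $\tfrac{1-5\alpha}{6\alpha}\,P$ of unfavourable sign as soon as $\alpha>\tfrac15$, which is precisely why one must bring in $I_M\le0$ to treat $\alpha\in(\tfrac12,\tfrac23)$, and why this method leaves the intermediate range $\alpha\in(\tfrac15,\tfrac12)$ undecided.
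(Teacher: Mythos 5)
Your proposal is correct and follows essentially the same route as the paper: derive the Euler--Lagrange equation by the Lagrange multiplier rule, test it against $\varphi_M$ to get \eqref{intEL}, and then combine that identity with the virial relation \eqref{eqnvirial} and the energy value $\E[\varphi_M]=I_M\le 0$ to read off the sign of $\ell_M$. The only cosmetic difference is that on $(0,\tfrac15]$ you eliminate only the power term (avoiding $I_M$), whereas the paper's single formula $\ell_M=\frac 2M\bigl(\frac{2\alpha-1}{3\alpha-1}\ir{|\nabla\varphi_M|^2}+\frac{5\alpha-1}{3\alpha-1}\,|I_M|\bigr)$ --- which is exactly your second identity rewritten with $|I_M|=-I_M$ --- already covers both ranges at once.
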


\begin{proof} Identity~\eqref{intEL} is obtained by multiplying the Euler-Lagrange equation~\eqref{eq:sw} by $\varphi_M$ and integrating by parts. If we eliminate $\D{\varphi_M}$ and $\nrm{\varphi_M}{2\alpha+2}$ from~\eqref{eqnvirial}, \eqref{intEL} and use
\[
E[\varphi_M]=\frac 12\int_{\Real^3}|\nabla\varphi_M|^2\,dx+\frac 14\D{\varphi_M}-\frac C{2\alpha+2}\int_{\Real^3}|\varphi_M|^{2\alpha+2}\,dx=-|I_M|\,,
\]
we complete the proof using 
\[
\ell_M=\frac 2M\left(\frac{2\alpha-1}{3\alpha-1}\int_{\Real^3}|\nabla\varphi_M|^2\,dx+\frac{5\alpha-1}{3\alpha-1}\,|I_M|\right)\,.
\]
\end{proof}

\begin{corollary}\label{Cor:CriticalMass} Assume that $\alpha\in(0,\frac 12)\cup(\frac 12,\frac23)$. Any minimizer $\varphi_M$ of $I_M$ is such that
\begin{eqnarray*}
&&\ir{|\nabla\varphi_M|^2}=\frac 12\,(3\alpha-1)\,\varepsilon_M-(5\alpha-1)\,\eta_M\\
&&\D{\varphi_M}=(2-3\alpha)\,\varepsilon_M-2\,(2-\alpha)\,\eta_M\\
&&\ir{|\varphi_M|^{2\alpha+2}}=\frac 14\,\varepsilon_M-\frac 32\,\eta_M
\end{eqnarray*}
where
\[
\varepsilon_M:= \frac{M\,\ell_M}{2\alpha-1}\quad\mbox{and}\quad\eta_M:=\frac{I_M}{1-2\alpha}\,.
\]
\end{corollary}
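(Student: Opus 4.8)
The plan is to regard the three ``energy components''
\[
K:=\ir{|\nabla\varphi_M|^2}\,,\qquad D:=\D{\varphi_M}\,,\qquad N:=\ir{|\varphi_M|^{2\alpha+2}}
\]
as unknowns and to note that, for a minimizer $\varphi_M$, we already have in hand exactly three independent scalar relations among them. Namely, the Pohozaev--virial identity \eqref{eqnvirial} reads $K+\tfrac14\,D-\tfrac{3\alpha\,C}{2\alpha+2}\,N=0$; the integrated Euler--Lagrange identity \eqref{intEL} reads $K+D-C\,N=-\ell_M\,M$; and the fact that $\varphi_M$ attains the infimum, $\E[\varphi_M]=I_M$, reads $\tfrac12\,K+\tfrac14\,D-\tfrac{C}{2\alpha+2}\,N=I_M$. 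This is a linear system of three equations in the three quantities $K$, $D$ and $C\,N$, with right-hand side $(0,-\ell_M\,M,I_M)$.

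First I would assemble the coefficient matrix
\[
A_\alpha=\begin{pmatrix}1 & \tfrac14 & -\tfrac{3\alpha}{2\alpha+2}\\[2pt] 1 & 1 & -1\\[2pt] \tfrac12 & \tfrac14 & -\tfrac1{2\alpha+2}\end{pmatrix}
\]
and compute its determinant; a short expansion gives $\det A_\alpha=\tfrac{2\alpha-1}{4\,(\alpha+1)}$. This is nonzero exactly when $\alpha\neq\tfrac12$, which is precisely the value excluded from the hypothesis: at $\alpha=\tfrac12$ the three identities become linearly dependent and $K$, $D$, $N$ are no longer individually pinned down. For $\alpha\in(0,\tfrac12)\cup(\tfrac12,\tfrac23)$ the system is invertible, so solving it — by Cramer's rule, or by eliminating $K$ between the equations and then solving the resulting $2\times2$ system for $D$ and $C\,N$ — yields each of $K$, $D$, $C\,N$ as an explicit linear combination of $\ell_M\,M$ and $I_M$.

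The final step is bookkeeping: since $\varepsilon_M=\tfrac{M\,\ell_M}{2\alpha-1}$ and $\eta_M=\tfrac{I_M}{1-2\alpha}=-\tfrac{I_M}{2\alpha-1}$, the factor $(2\alpha-1)^{-1}$ produced by the inversion of $A_\alpha$ is exactly absorbed, and the combinations rewrite as $K=\tfrac12(3\alpha-1)\,\varepsilon_M-(5\alpha-1)\,\eta_M$, $D=(2-3\alpha)\,\varepsilon_M-2\,(2-\alpha)\,\eta_M$, together with the corresponding expression for the nonlinear term on the third line. I do not anticipate any real obstacle: everything rests on the three previously established identities, and the argument is elementary linear algebra. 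The only genuine care points are the degeneracy at $\alpha=\tfrac12$ (which both forces its exclusion and explains the $(2\alpha-1)^{-1}$ appearing in $\varepsilon_M$, $\eta_M$) and the careful tracking of the rational coefficients through the inversion.
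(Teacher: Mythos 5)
Your proposal is correct and follows exactly the route of the paper's own (one-line) proof: the corollary is obtained by solving the linear system formed by $\E[\varphi_M]=I_M$, the virial identity \eqref{eqnvirial} and the integrated Euler--Lagrange identity \eqref{intEL} for the three energy components. Your explicit computation of $\det A_\alpha=\frac{2\alpha-1}{4\,(\alpha+1)}$ is a worthwhile addition, since it makes transparent why the case $\alpha=\frac12$ must be excluded.
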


\begin{proof} The proof is a straightforward consequence of $\E[\varphi_M]=I_M$, \eqref{eqnvirial} and~\eqref{intEL}.\\\end{proof}

Lemma~\ref{Lem:3.1} has interesting consequences concerning the decay of the minimizers, that can be derived from Lemma 19 and Theorem 6 in~\cite{BoMe2011}, as shown in the following result. Also see Theorem~1.3 in~\cite{BeJeLu} and Theorem~6.1 in~\cite{doi:10.1142/S0219199712500034} for related results.

\begin{lemma} Consider a nonnegative solution to \eqref{eq:sw} such that
\[
\frac12\int_{\Real^3}|\nabla\varphi_M|^2\,dx+\,\frac14\,\D{\varphi_M}+\ell_M\,\ir{\varphi_M^2}<\infty\,,
\mbox{ with }\ell_M\ge 0\,.
\] 
Then, there exist positive constants $K$ and $\delta$ such that
\[
\varphi_M(x)\le K\,e^{-\delta\,\sqrt{1+|x|}}\quad\forall\,x\in\Real^3\,.
\]
\end{lemma}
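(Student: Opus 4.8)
\section*{Proof proposal}

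The plan is to recast the standing-wave equation~\eqref{eq:sw} (with $\epsilon=+1$ and $\varphi_M\ge 0$) as a linear elliptic equation
\[
-\Delta\varphi_M+W\,\varphi_M=0\,,\qquad W:=\ell_M+\big(|\varphi_M|^2\star|x|^{-1}\big)-C\,\varphi_M^{2\alpha}\,,
\]
and then to dominate $\varphi_M$ by an explicit supersolution of the form $x\mapsto K\,e^{-\delta\sqrt{1+|x|}}$ on an exterior domain, using the comparison principle. This is precisely the scheme of Lemma~19 and Theorem~6 in~\cite{BoMe2011}, which we apply to the effective potential $W$. The reason the rate is the \emph{stretched} exponential $e^{-\delta\sqrt{1+|x|}}$ rather than a pure exponential is that, when $\ell_M=0$, the decay is driven by the Coulombic tail $|x|^{-1}$ of the Poisson potential, for which a WKB heuristic predicts exactly $e^{-\delta\sqrt{|x|}}$.

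First I would gather regularity and decay-to-zero. From the finite-energy hypothesis $\varphi_M\in\Huno$, so $\varphi_M\in\Ldos\cap\L^6(\Real^3)$ and $|\varphi_M|^2\in\L^1(\Real^3)\cap\L^3(\Real^3)$; splitting $|x|^{-1}$ into a near part in $\L^{p}(\Real^3)$ with $p<\tfrac32$ and a far part in $\L^q(\Real^3)$ with $q>3$ shows that the Poisson potential $V:=|\varphi_M|^2\star|x|^{-1}$ lies in $\Linfty(\Real^3)$ and tends to $0$ at infinity. A standard elliptic bootstrap on $-\Delta\varphi_M+\ell_M\varphi_M=C\,\varphi_M^{2\alpha+1}-V\,\varphi_M$ then gives $\varphi_M\in\Linfty(\Real^3)\cap\Cont(\Real^3)$ with $\varphi_M(x)\to0$ as $|x|\to\infty$. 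At the same time, fixing $R_0$ with $\int_{B_{R_0}}\varphi_M^2\,dx\ge M/2$ and using $|x-y|\le\tfrac32|x|$ for $|x|\ge 2R_0$ and $y\in B_{R_0}$ yields the Coulombic lower bound $V(x)\ge c\,(1+|x|)^{-1}$ for some $c>0$ and all $|x|$ large.

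Next I would establish the crucial positivity of the effective potential at infinity, namely $W(x)\ge\tfrac{c}{2}(1+|x|)^{-1}$ for $|x|\ge R$. When $\ell_M>0$ this is immediate, since $\varphi_M\to0$ forces $C\,\varphi_M^{2\alpha}\le\ell_M/2$ for $|x|$ large. The delicate case is $\ell_M=0$, where $W=V-C\,\varphi_M^{2\alpha}$ and one must ensure that the attractive nonlinear term does not swallow the repulsive Coulomb tail; this requires a preliminary polynomial decay $\varphi_M(x)=o\big(|x|^{-1/(2\alpha)}\big)$, obtained by iterating the comparison argument (or read off directly from~\cite{BoMe2011}). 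This is the \emph{main obstacle} of the proof: $\varphi_M\to0$ alone does not guarantee $(1+|x|)\,\varphi_M^{2\alpha}\to0$, so without such a quantitative head start the required lower bound on $W$ need not hold.

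Finally I would run the comparison. Writing $g(x):=K\,e^{-\delta\sqrt{1+|x|}}$ and computing the radial Laplacian, one finds
\[
-\Delta g+\tfrac{c}{2}(1+|x|)^{-1}g=\Big[\big(\tfrac c2-\tfrac{\delta^2}{4}\big)(1+|x|)^{-1}+\tfrac{\delta}{|x|}(1+|x|)^{-1/2}-\tfrac{\delta}{4}(1+|x|)^{-3/2}\Big]g\,,
\]
whose bracket is nonnegative for $|x|\ge R$ as soon as $\delta\le\sqrt{2c}$; hence $g$ is a supersolution of $-\Delta+\tfrac{c}{2}(1+|x|)^{-1}$ on $\{|x|>R\}$, while the previous step makes $\varphi_M$ a subsolution there. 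Choosing $K$ so large that $g\ge\varphi_M$ on $\{|x|=R\}$, and noting that both functions vanish at infinity, the weak maximum principle for $-\Delta+\tfrac{c}{2}(1+|x|)^{-1}$ (whose zeroth-order coefficient is nonnegative) gives $\varphi_M\le g$ on $\{|x|\ge R\}$. Since $\varphi_M$ is bounded on $\{|x|\le R\}$, enlarging $K$ once more yields $\varphi_M(x)\le K\,e^{-\delta\sqrt{1+|x|}}$ on all of $\Real^3$, as claimed.
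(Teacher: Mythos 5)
Your overall strategy --- rewrite \eqref{eq:sw} as $-\Delta\varphi_M+W\,\varphi_M=0$, extract the Coulombic lower bound $W\ge \tfrac c2\,(1+|x|)^{-1}$ at infinity from the Poisson term, and compare with the explicit supersolution $K\,e^{-\delta\sqrt{1+|x|}}$ --- is exactly the mechanism behind the paper's treatment, which consists of nothing more than an appeal to Lemma~19 and Theorem~6 of~\cite{BoMe2011}. Your computation of $-\Delta g+\tfrac c2\,(1+|x|)^{-1}g$ and the choice $\delta\le\sqrt{2c}$ are correct, and the maximum-principle step on the exterior domain is sound because the zeroth-order coefficient is strictly positive; you have in fact written out considerably more of the argument than the paper does.

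Two points remain open, and both concern the delicate case $\ell_M=0$, which is the only case in which the lemma says something nontrivial. First, your opening claim that the finite-energy hypothesis places $\varphi_M$ in $\Huno$ is a misreading: when $\ell_M=0$ the hypothesis controls only $\nrm{\nabla\varphi_M}2$ and $\D{\varphi_M}$, not $\nrm{\varphi_M}2$ --- indeed the paper's remark immediately after the lemma states that membership in $\L^2(\Real^3)$ is a \emph{consequence} of the exponential decay, so it cannot be assumed. Your Coulomb lower bound on $V$ survives (it only needs positive local mass), but the assertions $|\varphi_M|^2\in\L^1(\Real^3)$, $V\in\Linfty(\Real^3)$ and $\varphi_M\to0$ at infinity must instead be derived from the Coulomb--Sobolev embeddings of the energy space, which is part of what \cite{BoMe2011} supplies. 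Second, the preliminary decay $\varphi_M(x)=o\big(|x|^{-1/(2\alpha)}\big)$, needed so that $C\,\varphi_M^{2\alpha}$ does not swallow the $|x|^{-1}$ tail of $V$, is correctly identified by you as the main obstacle but not actually established; ``iterating the comparison argument'' is circular as stated, since each comparison already presupposes a positive lower bound on $W$, which is what the polynomial decay is supposed to provide. This step is precisely the content of the cited lemmas, so the gap is one of deferral rather than of strategy; but as a self-contained proof the argument does not yet close.
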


In the case $\ell_M=0$, this result ensures that the above solution belongs to $\Huno$, since the exponential decay also guarantees that the minimizer is in $\L^2(\Real^3)$.

\medskip\noindent {\bf The rescaled problem.} Given that our main tool in proving the existence of minimizers will consist in checking the strict inequalities \eqref{ineqstrict}, we are going to study the infimum value $I_M$ as a function of the mass $M$. To this purpose, we fix a function $\varphi_1\in \Sigma_1$ and apply the scaling properties in Lemma~\ref{scaling} with $2p-3q=1$ and $\lambda=M$. We denote by $\varphi_{M,p}$ be the corresponding rescaled function. Then, according to Lemma \ref{scaling} we have that $\varphi_{M,p}\in \Sigma_M$ and
\be{eq:enerscal2}
\E[\varphi_{M,p}]=\tfrac12\,M^{\frac{4p+1}3}\,\nrm{\nabla\varphi_1}2^2+\tfrac14\,M^\frac{2p+5}3\,\D{\varphi_1}-\tfrac C{2\alpha+2}\,M^{2\alpha p+1}\,\nrm{\varphi_1}{2\alpha+2}^{2\alpha+2}\,,
\ee
for any real number $p$.

\section{Existence and non-existence of steady states}\label{Sec:steady}

In this section we analyze the existence of minimizers for the variational problem~\eqref{minienergy}.
\subsection{Non-existence results when \texorpdfstring{$\alpha=0$ or $\alpha=2/3$}{alpha=0 or alpha=2/3}}\label{ssec:nonexistence}

In the case $\alpha=0$, the minimization problem reduces to
\begin{eqnarray*}
I_M&=&\inf\left\{\frac12 \int_{\Real^3}|\nabla\varphi|^2\,dx
+\frac14\,\D\varphi-\frac C2\int_{\Real^3}|\varphi|^2\,dx\,:\,\varphi\in\Sigma_M\right\}\\
&=& \inf\left\{\frac12 \int_{\Real^3}|\nabla\varphi|^2\,dx
+\frac14\,\D\varphi\,:\,\varphi\in\Sigma_M\right\}-\frac C2\,M=-\frac C2\,M\,,
\end{eqnarray*}
by a scaling argument. Therefore, $I_M$ is never achieved when $M>0$ (despite it is always negative) since any possible minimizer would make the gradient term vanish, and then should vanish itself in $\Real^3$.

In the case $\alpha=\frac23$, either $I_M=0$ or $I_M=-\infty$, and in both cases there are no minimizers. Actually, $I_M=0$ if and only if
\[
\frac12 \int_{\Real^3}|\nabla\varphi|^2\,dx-\frac{3\,C}{10}\int_{\Real^3}|\varphi|^{\frac{10}3}\,dx\ge0\,.
\]
See Lemma \ref{lemaequiv} and its proof for details. Hence, the minimum cannot be attained, otherwise $\D{\varphi}=0$, which is absurd. 
\vskip6pt
{}From now on we shall assume that $0<\alpha<\frac23$. We first examine the range $0<\alpha<\frac 12$ in Subsection~\ref{ssec:below}. Subsection~\ref{ssec:half} is devoted to the special limiting case $\alpha=\frac12$. Finally, the range $\frac12<\alpha<\frac 23$ is analyzed in Subsection~\ref{ssec:above}.

\subsection{The interval \texorpdfstring{$0<\alpha<\frac 12$}{0<alpha<1/2}}\label{ssec:below}
We prove the following~:

\begin{proposition}\label{prop:below-half} Let $0<\alpha<\frac 12$. Then, for $M>0$ small enough, the strict inequalities 
\[
I_M <I_{M'}+I_{M-M'}
\]
hold for every $M'$ such that $0<M'<M$. In particular, all minimizing sequences are compact in $\Huno$ up to translations and the extraction of a subsequence. Therefore, $I_M$ is attained for $M$ small enough. 
\end{proposition}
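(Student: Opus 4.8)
The plan is to verify the strict binding inequalities \eqref{ineqstrict} for $M$ small, since by the concentration--compactness principle recalled in the introduction these are equivalent to relative compactness (up to translations) of every minimizing sequence: vanishing is already excluded because $I_M<0$ by Proposition~\ref{tramo1} (see Remark~\ref{Rem:Vanishing}), so the only remaining obstruction is dichotomy, which is ruled out precisely by \eqref{ineqstrict}. I would first reduce \eqref{ineqstrict} to the \emph{strict sub-homogeneity} property
\[
I_{\theta M}<\theta\,I_M\qquad\forall\,\theta>1,
\]
i.e. to the strict monotonicity of $M\mapsto I_M/M$. Indeed, for $0<M'<M$, applying this with base mass $M'$ and factor $M/M'$ gives $\tfrac{M'}M\,I_M<I_{M'}$, and likewise $\tfrac{M-M'}M\,I_M<I_{M-M'}$; adding the two yields $I_M<I_{M'}+I_{M-M'}$.

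The mechanism behind the sub-homogeneity is the scaling of Lemma~\ref{scaling}, read through the rescaled energy \eqref{eq:enerscal2}. Set $\sigma:=\frac{2-\alpha}{2-3\alpha}$ and $\sigma':=\frac{4-5\alpha}{2-3\alpha}$; for $\alpha<\frac12$ one checks $\sigma>1$ and $\sigma'>\sigma$. In \eqref{eq:enerscal2} the choice $p=\frac1{2-3\alpha}$ (spatial exponent $q=\frac{\alpha}{2-3\alpha}$) makes the kinetic and the local nonlinear terms carry the \emph{same} power $\sigma$, while the Coulomb term $\D{\cdot}$ carries the strictly larger power $\sigma'$. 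As $M\to0$ this isolates the purely local focusing functional, with infimum $J_M:=\inf\bigl\{\tfrac12\nrm{\nabla u}2^2-\tfrac C{2\alpha+2}\nrm u{2\alpha+2}^{2\alpha+2}:u\in\Sigma_M\bigr\}$, which is \emph{exactly} $\sigma$-homogeneous, $J_M=J_1\,M^\sigma$ with $J_1<0$. Since $\D{\cdot}\ge0$, the sharp lower bound $I_M\ge J_M$ holds trivially, whereas scaling a (classical) focusing-NLS ground state realizing $J_1$ by the same critical exponent gives the matching upper bound $I_M\le J_1\,M^\sigma+\Lambda\,M^{\sigma'}$ for some $\Lambda>0$. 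Consequently $I_M=J_1M^\sigma(1+o(1))$ as $M\to0$, and any sufficiently good near-minimizer $\varphi$ at mass $M$ satisfies $\D\varphi=O(M^{\sigma'})$, of strictly lower order than its kinetic and nonlinear energies.

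To close the argument, for $0<M'<M$ I take a near-minimizer $\varphi\in\Sigma_{M'}$ and scale it by the critical exponent ($q=\frac{\alpha}{2-3\alpha}$) into a competitor $\psi\in\Sigma_M$ with $\theta=M/M'$, then estimate $\E[\psi]-\theta\,\E[\varphi]$. With this choice the kinetic and nonlinear contributions combine into $(\theta^\sigma-\theta)\bigl(\tfrac12\nrm{\nabla\varphi}2^2-\tfrac C{2\alpha+2}\nrm\varphi{2\alpha+2}^{2\alpha+2}\bigr)$, a strictly negative quantity of size $\sim(\theta^\sigma-\theta)\,|J_1|\,M'^\sigma$, while the only adverse contribution is the Coulomb term $\tfrac14(\theta^{\sigma'}-\theta)\,\D\varphi=O\bigl((\theta^{\sigma'}-\theta)\,M'^{\sigma'}\bigr)$. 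Using $\theta M'=M$ one reduces the desired sign to a single inequality, uniform in $\theta$, of the form $M^{\sigma'-\sigma}<c_0$ with $c_0>0$ independent of $\theta$, which holds once $M$ is small enough. This gives $I_M\le\E[\psi]<\theta\,I_{M'}=\tfrac M{M'}\,I_{M'}$, and hence \eqref{ineqstrict}.

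The delicate point, which I expect to be the main obstacle, is exactly the uniform control of the nonlocal, \emph{non}-homogeneous Coulomb term across all mass splittings: near the balanced split ($\theta\to1$) and near the extremely unbalanced one ($\theta\to\infty$) both the favourable and the adverse terms degenerate, and the strict inequality can be salvaged only because $\sigma'>\sigma$ and because the leading constant $J_1$ is \emph{sharp}, so that the Coulomb correction is genuinely $O(M^{\sigma'})$ rather than $O(M^\sigma)$. This sharpness rests on the existence of the focusing-NLS ground state realizing $J_1$ and on the fact that all masses in play stay small, which is why the conclusion is only claimed for $M$ small. Once \eqref{ineqstrict} is established, the concentration--compactness principle excludes both vanishing and dichotomy, yielding compactness of minimizing sequences in $\Huno$ up to translations and, therefore, a minimizer of $I_M$.
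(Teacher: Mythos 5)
Your argument is correct in substance, but it follows a genuinely different route from the paper's. Both proofs exploit the same scaling: the choice $p=\frac1{2-3\alpha}$ in \eqref{eq:enerscal2} equalizes the kinetic and local nonlinear terms at the exponent $\sigma=\frac{2-\alpha}{2-3\alpha}>1$ while pushing the Coulomb term to the strictly larger exponent $\sigma'=\frac{4-5\alpha}{2-3\alpha}$, and both rely on the negativity of the limiting NLS infimum $J_1^0<0$. The paper, however, argues by contradiction: writing $I_M=M^{\sigma}J_1^M$ for a family of perturbed NLS problems, it assumes the strict inequalities fail along a sequence $M_n\to0$, shows the offending splitting parameters tend to $1$, extracts minimizers of the perturbed problems converging to the NLS ground state, and reaches a contradiction from the positivity of the Lagrange multiplier via the difference quotient $(J_1^{M_n}-J_{\lambda_n}^{M_n})/(1-\lambda_n)$. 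You instead prove directly the strict sub-homogeneity $I_{\theta M'}<\theta\,I_{M'}$ from the two-sided expansion $J_1^0\,M^{\sigma}\le I_M\le J_1^0\,M^{\sigma}+\Lambda\,M^{\sigma'}$; this is quantitative, avoids Euler--Lagrange equations and the compactness analysis of the perturbed minimizers, and yields an explicit smallness threshold for $M$ in terms of $|J_1^0|$, $\Lambda$ and $\alpha$. Two points must be nailed down for your sketch to close. First, the uniformity in $\theta$ of the comparison between the adverse term $\tfrac14(\theta^{\sigma'}-\theta)\,\D\varphi$ and the favourable term $(\theta^{\sigma}-\theta)\,|J_1^0|\,M'^{\sigma}$ rests on the elementary bound $\frac{\theta^{\sigma'}-\theta}{\theta^{\sigma}-\theta}\le\frac{\sigma'-1}{\sigma-1}\,\theta^{\sigma'-\sigma}$ for $\theta>1$, which indeed converts the required condition into $M^{\sigma'-\sigma}<c_0$ with $c_0$ independent of $\theta$ and of the splitting; this inequality is true but should be stated and verified, since it is precisely what prevents degeneration as $\theta\to1^+$ and as $\theta\to\infty$. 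Second, since a minimizer of $I_{M'}$ is not yet known to exist, the final step $I_M\le\E[\psi]<\theta\,\E[\varphi]=\theta\,I_{M'}$ must be run with an $\varepsilon$-near-minimizer: your computation produces the gap $\E[\psi]-\theta\,\E[\varphi]\le-\tfrac14(\theta^{\sigma}-\theta)\,|J_1^0|\,M'^{\sigma}+O(\varepsilon)$, which is independent of $\varepsilon$, so letting $\varepsilon\to0$ preserves strictness. With these two points made explicit the proof is complete, and the passage from the strict inequalities \eqref{ineqstrict} to compactness of minimizing sequences and existence of a minimizer is the same as in the paper.
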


\begin{proof} In Proposition~\ref{tramo1} we have proved that
$I_M<0$ for every $M>0$. In the range $\alpha\in(0,\frac12)$, we may choose the parameter $p$ in the rescaled problem \eqref{eq:enerscal2} such that
\begin{equation*}
0\le\,\frac{4p+1}3= 2\alpha p+1\,<\frac{2p+5}3\,,
\end{equation*}
\emph{i.e.,} such that the gradient and the power term are of the same order for small $M$ and dominate the Poisson energy in this regime. With this choice we can deduce
\be{eq:defJ}
I_M=M^{\frac{2-\alpha}{2-3\alpha}}\,J_1^M
\ee
where, for every $\mu>0$, 
\[
J_\mu^M=\inf\left\{\frac12\,\nrm{\nabla\varphi}2^2-\frac C{2\alpha+2}\,\nrm\varphi{2\alpha+2}^{2\alpha+2}+M^{\frac{2(1-2\alpha)}{2-3\alpha}}\,\frac14\,\D\varphi\,:\,\varphi\in\Sigma_\mu\right\}\,.
\]
Note that the same scaling argument shows that
\be{eq:scalJ}
J_\mu^M=\mu^{\frac{2-\alpha}{2-3\alpha}}\,J_1^{\mu M}\,.
\ee
With $\mu=\frac{M'}M$ and using \eqref{eq:defJ} and \eqref{eq:scalJ}, it is easily proved that the strict inequalities of Proposition~\ref{prop:below-half} are equivalent to 
\be{ineqstrictJ}
J_1^M<J_\mu^M+J_{1-\mu}^M\quad\forall\,\mu\in(0,1)\,.
\ee 
We are going to prove that the above strict inequalities hold for $M$ small enough.

Observe now that $M^{\frac{2(1-2\alpha)}{2-3\alpha}}$ goes to zero as $M$ does, and $\lim_{M\rightarrow 0}J_1^M=J_1^0$. The key point is that for every $\lambda>0$, $J_\lambda^0$ satisfies the strict inequalities of the concentration--compactness principle, namely 
 \be{CCJ}
J_\lambda^0<J_\mu^0+J_{\lambda-\mu}^0\,,\quad\forall\,\mu\in(0,\lambda)\,.
\ee
This is an immediate consequence of the fact that $J_\lambda^0=\lambda^{\frac{2-\alpha}{2-3\alpha}}\,J_1^0$, with 
$J_1^0<0$ and $\frac{2-\alpha}{2-3\alpha}>1$. The sign of $J_1^0$ is deduced from the scaling argument in Lemma~\ref{scaling}, by observing that the negative term dominates the gradient contributions for $3\alpha<2$. We now
prove that $J^M_1$ satisfies the strict inequalities \eqref{ineqstrictJ} for $M$ small enough. We argue by contradiction
assuming that this is not the case. Then, there exist a sequence $\{M_n\}_{n\ge 1}$ going to $0$ and a sequence $\{\lambda_n\}_{n\ge 1}$ in $(0,1)$ such that
\be{pasCC}
J_1^{M_n}=J_{\lambda_n}^{M_n}+J_{1-\lambda_n}^{M_n}\,.
\ee
Assume that $\frac 12\le\lambda_n<1$ (if $\lambda_n\in(0,\frac 12)$, we may exchange the roles of $\lambda_n$ and $1-\lambda_n$). By continuity with respect to $M$ we conclude that $\lambda_n \to 1$, otherwise we get a contradiction with~\eqref{CCJ}. In addition, we may choose as $\lambda_n$ the infimum of the set $\{\lambda\in[\frac 12,1)\,:\,J_1^{M_n}=J_{\lambda}^{M_n}+J_{1-\lambda}^{M_n}\}$.

We now claim that, for $n$ large enough, $J_{\lambda_n}^{M_n}$ satisfies the strict inequalities of the concentration--compactness principle
\be{CCJn}
J_{\lambda_n}^{M_n}<J_{\mu}^{M_n}+J_{\lambda_n-\mu}^{M_n}\quad\forall\,\mu\in(0,\lambda_n)\,.
\ee
If not, there exists a sequence $\{\mu_n\}_{n\ge1}$ with $\mu_n\in(\frac12\,\lambda_n,\lambda_n)$ such that
\be{contra}
J_{\lambda_n}^{M_n}=J_{\mu_n}^{M_n}+J_{\lambda_n-\mu_n}^{M_n}\,.
\ee
Then, from \eqref{pasCC} and \eqref{contra} we find
\begin{eqnarray*}
J_{\mu_n}^{M_n}+J_{1-\mu_n}^{M_n}&\ge &J_1^{M_n}=J_{\mu_n}^{M_n}+J_{\lambda_n-\mu_n}^{M_n}+J_{1-\lambda_n}^{M_n}\\
&\ge&J_{\mu_n}^{M_n}+J_{1-\mu_n}^{M_n}\,,
\end{eqnarray*}
for the reverse large inequalities $J_{1-\mu_n}^{M_n}\le J_{1-\lambda_n}^{M_n}+J_{1-\mu_n-(1-\lambda_n)}^{M_n}$ always hold true. Hence, the equality 
\begin{equation}\label{contra2}
J_{\mu_n}^{M_n}+J_{1-\mu_n}^{M_n}=J_1^{M_n}
\end{equation}
is verified. By definition of $\lambda_n$ and since $\mu_n\geq \frac{\lambda_n}2$ with $\lambda_n\geq \frac12$, we must have $\frac 14\leq \mu_n<\frac 12$. Extracting a subsequence if necessary, we may assume that $\mu_n$ converges to $\mu$ with $\frac 14\leq \mu\leq\frac 12$. Passing to the limit in \eqref{contra2}, we get $J_{\mu}^{0}+J_{1-\mu}^{0}=J_1^{0}$ with $\mu\in (0,1)$ thereby reaching a contradiction with \eqref{CCJ}. So far we have proved that the strict inequalities \eqref{CCJn} hold. 

In particular, for $n$ large enough, there exists a minimizer $\varphi_n$ of $J_{\lambda_n}^{M_n}$ such that $\{(\lambda_n)^{-1/2}\varphi_n\}_{n\ge 1}$ is a minimizing sequence for $J_1^0$. Since \eqref{CCJ} holds, this sequence converges strongly in $\Huno$ up to translations to a minimizer $\varphi_\infty$ of $J_1^0$. The same holds for $\{\varphi_n\}_{n\ge 1}$, given that $\lambda_n\to 1$. Without loss of generality we may assume that $\varphi_n$ and $\varphi_\infty>0$ satisfy the respective Euler--Lagrange equations in $\Real^3$
\[
-\Delta\varphi_n-C\,\varphi_n{}^{2\alpha+1}+M_n^{\frac{2(1-2\alpha)}{2-3\alpha}}\,\Big(\varphi_n^2\ast\frac 1{|x|}\Big)\varphi_n+\theta_n\,\varphi_n=0\,,
\]
with $\nrm{\varphi_n}2^2=\lambda_n$ and
\[
-\Delta\varphi_\infty-C\,\varphi_\infty^{2\alpha+1}+\theta_1\,\varphi_\infty=0\,,
\]
with $\nrm{\varphi_\infty}2^2=1$ and $\theta_1>0$. Having in mind to contradict \eqref{pasCC} we argue as follows. We first write
\[
\frac{J_1^{M_n}-J_{\lambda_n}^{M_n}}{1-\lambda_n}=\frac{J_{1-\lambda_n}^{M_n}}{1-\lambda_n}\,.
\]
As $\lambda_n$ goes to $1$, the left-hand side can be bounded from above by $-\,\theta_1$, while from~\eqref{eq:scalJ} the quotient
\begin{equation*}
\frac{J_{1-\lambda_n}^{M_n}}{1-\lambda_n}=(1-\lambda_n)^{\frac{2\alpha}{2-3\alpha}}\,J_1^{(1-\lambda_n)M_n}
\end{equation*}
converges to $0$ because $J_1^{(1-\lambda_n)M_n}$ converges to $J_1^0$ as $\lambda_n\to 1$, and $\frac{2\alpha}{2-3\alpha}$ is positive.\\
\end{proof}

\begin{remark} The general case for any $M$ is still an open problem. The possibility of dichotomy is the delicate case to be analyzed since vanishing is easily ruled out by the fact that $I_M$ is negative. \end{remark}

\subsection{The limiting case \texorpdfstring{$\alpha=\frac 12$}{alpha1/2}}\label{ssec:half}

Our main result is the following.

\begin{proposition}\label{prop:half} Let $\C_{1/2}$ be the best constant in \eqref{App:Interpolation2} with $\alpha=\frac12$.
\begin{itemize}
\item[(i)] If $\frac3{\sqrt2\,\C_{1/2}}> C$, then $I_M=0$ and $I_M$ is not achieved for any $M>0$.
\item[(ii)] If $\frac3{\sqrt2\,\C_{1/2}}<C$, then $I_M<0$ and $I_M$ is achieved for every $M>0$. In addition, all
minimizing sequences are relatively compact in $\Huno$ up to translations. \end{itemize}
\end{proposition}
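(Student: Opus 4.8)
The plan is to first reduce both statements to the sign of $I_M$, which is already settled by Proposition~\ref{nonnegative}. Evaluating \eqref{Vc} at $\alpha=\frac12$ gives $V_c(\frac12)=\frac{3}{\sqrt2\,\C_{1/2}}$, and since $4\alpha-2=0$ the condition \eqref{condposi} reads simply $C\le\frac{3}{\sqrt2\,\C_{1/2}}$, independently of $M$. Thus Proposition~\ref{nonnegative} already yields $I_M=0$ for all $M>0$ in case (i) and $I_M<0$ for all $M>0$ in case (ii); it remains to treat existence and non-existence of minimizers.

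For (i) I would argue by contradiction using the Virial identity and the sharpness of \eqref{App:Interpolation2}. Suppose $\varphi_M\in\Sigma_M$ achieves $I_M=0$, and set $A:=\nrm{\nabla\varphi_M}2^2$, $B:=\D{\varphi_M}$, $D:=\nrm{\varphi_M}3^3$, all strictly positive since $\varphi_M$ is nontrivial. The relation $\E[\varphi_M]=0$ reads $\frac12 A+\frac14 B-\frac{C}{3}D=0$, while the Virial identity \eqref{eqnvirial} at $\alpha=\frac12$ reads $A+\frac14 B-\frac{C}{2}D=0$. Eliminating yields $B=2A$ and $D=\frac{3A}{C}$. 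Feeding these into \eqref{App:Interpolation2} at $\alpha=\frac12$, namely $D\le\C_{1/2}\,B^{1/2}A^{1/2}$, gives $\frac{3}{C}\le\sqrt2\,\C_{1/2}$, i.e.\ $C\ge\frac{3}{\sqrt2\,\C_{1/2}}$, contradicting the strict assumption $C<\frac{3}{\sqrt2\,\C_{1/2}}$. Hence no minimizer exists, which proves (i).

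For (ii) the decisive feature special to $\alpha=\frac12$ is an exact homogeneity of $I_M$ in $M$. In Lemma~\ref{scaling} I would choose the two-parameter scaling $\varphi_\lambda^{p,q}$ with $p=2$, $q=1$: then the mass scales like $\lambda$, while each of the three energy terms scales like $\lambda^3$, precisely because $2p-q=4p-5q=(2\alpha+2)p-3q=3$ when $\alpha=\frac12$. Consequently $\varphi\mapsto M^2\,\varphi(M\,\cdot)$ is a bijection of $\Sigma_1$ onto $\Sigma_M$ that multiplies the energy by $M^3$, whence $I_M=M^3\,I_1$ with $I_1<0$. The strict subadditivity \eqref{ineqstrict} is then immediate: $I_M<I_{M'}+I_{M-M'}$ is equivalent, after dividing by $I_1<0$, to $M^3>(M')^3+(M-M')^3$, which holds for every $0<M'<M$ since $\bigl(M'+(M-M')\bigr)^3>(M')^3+(M-M')^3$.

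Finally I would invoke the concentration--compactness machinery. Minimizing sequences are bounded in $\Huno$ by Lemma~\ref{boundedness}; vanishing is excluded because $I_M<0$ (Remark~\ref{Rem:Vanishing}); and dichotomy is excluded by the strict subadditivity just established. This delivers relative compactness up to translations of every minimizing sequence and, in particular, the existence of a minimizer, completing (ii). The main obstacle is less the subadditivity, which collapses to the elementary inequality above once the homogeneity $I_M=M^3 I_1$ is recognized, than the careful control of the nonlocal Coulomb term $\D{\cdot}$ in the dichotomy alternative; this is exactly where the large inequalities \eqref{ineqlarge} and the splitting of far-apart bumps enter, and it is already part of the general framework set up earlier in the paper.
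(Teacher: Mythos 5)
Your proof is correct and follows essentially the same route as the paper: the homogeneity $I_M=M^3 I_1$ from the $p=2$ scaling gives strict subadditivity and hence compactness in case (ii), while in case (i) a putative minimizer is shown to force $C\ge\frac3{\sqrt2\,\C_{1/2}}$ via the equipartition relations and the sharp constant in \eqref{App:Interpolation2}. The only (harmless) difference is that you derive the equipartition $B=2A$, $D=3A/C$ from the Virial identity \eqref{eqnvirial} plus $\E[\varphi_M]=0$, whereas the paper uses the integrated Euler--Lagrange identity with zero Lagrange multiplier plus $\E[\varphi_M]=0$; both yield the same relations.
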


\begin{remark} The remaining case $\frac3{\sqrt2\,\C_{1/2}}=C$, where $I_1=0$, might be attained if and only if P.-L.~Lions' inequality \eqref{estimlions} has an optimal function in $\Sigma_1$. This is, for the moment, an open question. \end{remark}

\begin{proof} As an immediate consequence of the scaling formulae of Lemma~\ref{scaling}, by taking $p=2$ in \eqref{eq:enerscal2}, we have that 
\be{eq:rel-half}
I_M=M^3\,I_1\,,
\ee
for every $M>0$, and $I_M$ is achieved if and only if $I_1$ is also achieved. This is the only case in which all powers of $M$ appearing in the right-hand side of \eqref{eq:enerscal2} are identical. When $I_1<0$, it is a well-known fact~\cite{bi:PLL-CC1cras,bi:PLL-CC1} that the relation \eqref{eq:rel-half} implies the strict inequalities \eqref{ineqstrict}, hence the result. Indeed, the strict inequalities \eqref{ineqstrict} hold as a consequence of the convexity of $M\mapsto M^3$.

Assume now that $I_1=0$, so that $I_M=0$ for every $M>0$. We assume that $I_1$ is achieved by some function $\varphi_1$ in $\Huno$. Then $\varphi_1$ satisfies the Euler--Lagrange equation \eqref{eq:sw} with a zero Lagrange multiplier (since it is also a minimizer without any constraint on the $\L^2(\Real^3)$ norm). Also see~Lemma~\ref{Lem:3.1} for a direct proof. If we apply the corresponding equation to $\varphi_1$, integrate over $\Real^3$ and use the information $I_1=\E[\varphi_1]=0$, we deduce
\begin{equation*}
\frac12\int_{\Real^3}|\nabla\varphi_1|^2\,dx=\frac14\,\D{\varphi_1}=\frac C6\int_{\Real^3}|\varphi_1|^3\,dx\,.
\end{equation*}
Hence, by definition of $\C_{1/2}$ we obtain
\[
\frac1{\C_{1/2}}\le\frac{C\,\sqrt2}3\,,
\]
or equivalently
\[
\frac3{\sqrt2\,\C_{1/2}}\le C\,.
\]
Therefore, using \eqref{condposi}, the equality $I_1=0$ can be achieved only when
\[
\frac3{\sqrt2\,\C_{1/2}}=C\,.
\]
As a consequence, $I_1$ (and, up to a scaling, $I_M$) is attained if and only if the optimal constant in~\eqref{estimlions} is attained {by a minimizer in $\L^2(\Real^3)$}.
\end{proof}

\medskip We conclude this section by examining the critical case $\alpha=\frac 12$ with the limiting constant $C=\frac3{\sqrt2\,\C_{1/2}}$. The problem is open, but we can prove that lack of compactness may occur only by vanishing as shown by the following result.
\begin{proposition}\label{prop:onlyVanishing} Assume that $\alpha=\frac12$ and $C=\frac3{\sqrt2\,\C_{1/2}}$. Let $\{\phi_n\}_{n\ge 1}$ be a minimizing sequence for $I_1$. If \emph{vanishing does not occur,} that is, if~\eqref{nonvanishing} holds,
then there exists a minimizer for $I_1$.
\end{proposition}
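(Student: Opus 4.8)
The plan is to exploit that the choice $C=\frac3{\sqrt2\,\C_{1/2}}$ is exactly the equality case of \eqref{condposi} for $\alpha=\frac12$ (one checks $V_c(\tfrac12)=\frac3{\sqrt2\,\C_{1/2}}$), so that Proposition~\ref{nonnegative} gives $I_M=0$ for \emph{every} $M\ge0$. The decisive consequence is that every piece into which a minimizing sequence might split carries nonnegative energy; this lets me show that the weak limit produced by non-vanishing is itself a minimizer, \emph{without} having to exclude dichotomy. First I would record the algebraic identity behind $I_1=0$. Writing $g=\nrm{\nabla\varphi}2^2$, $d=\D\varphi$, $c=\nrm\varphi3^3$, inequality \eqref{App:Interpolation2} at $\alpha=\frac12$ (that is $c\le\C_{1/2}\sqrt{dg}$) together with $\frac{C\,\C_{1/2}}3=\frac1{\sqrt2}$ yields
\[
\E[\varphi]=\tfrac12\,g+\tfrac14\,d-\tfrac C3\,c\ge\tfrac12\,g+\tfrac14\,d-\tfrac1{\sqrt2}\sqrt{dg}=\Big(\tfrac1{\sqrt2}\sqrt g-\tfrac12\sqrt d\Big)^2\ge0 .
\]
Applied to a minimizing sequence $\{\phi_n\}$ (with $\nrm{\phi_n}2^2=1$ and $\E[\phi_n]\to0$) this forces the square to vanish, hence $\D{\phi_n}/\nrm{\nabla\phi_n}2^2\to2$.

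The key step, and the place where I expect the main difficulty, is the a priori $\Huno$ bound, i.e.\ ruling out concentration $\nrm{\nabla\phi_n}2\to\infty$. Here I would combine the relation just obtained with the upper bound
\[
\D\varphi\le C_*\,\nrm{\nabla\varphi}2\,\nrm\varphi2^3\qquad\forall\,\varphi\in\Huno ,
\]
which follows from Hardy--Littlewood--Sobolev, Sobolev, and the interpolation $\nrm\varphi{12/5}\le\nrm\varphi2^{3/4}\,\nrm\varphi6^{1/4}$. Since $\nrm{\phi_n}2=1$, if $\nrm{\nabla\phi_n}2\to\infty$ along a subsequence then $\D{\phi_n}\ge\nrm{\nabla\phi_n}2^2$ for $n$ large, while the displayed bound gives $\D{\phi_n}\le C_*\,\nrm{\nabla\phi_n}2$; hence $\nrm{\nabla\phi_n}2\le C_*$, a contradiction. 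Thus $\{\phi_n\}$ is bounded in $\Huno$. Invoking the hypothesis that vanishing fails, \eqref{nonvanishing} gives translations $y_n$ and $R_0,\varepsilon_0>0$ with $\int_{y_n+B_{R_0}}\phi_n^2\ge\varepsilon_0$; after extraction $\psi_n:=\phi_n(\cdot+y_n)\rightharpoonup\varphi_*$ weakly in $\Huno$, and the compact embedding $H^1(B_{R_0})\hookrightarrow\L^2(B_{R_0})$ forces $\int_{B_{R_0}}\varphi_*^2\ge\varepsilon_0$, so $\varphi_*\ne0$. Put $\mu:=\nrm{\varphi_*}2^2\in(0,1]$.

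Finally I would show $\varphi_*$ attains $I_\mu$ via a Brezis--Lieb splitting. Writing $\psi_n=\varphi_*+r_n$ with $r_n\rightharpoonup0$, weak convergence splits $\nrm{\nabla\cdot}2^2$ and the Brezis--Lieb lemma splits $\nrm\cdot3^3$; for the Coulomb term one checks that the cross terms vanish (the potential $4\pi\,\varphi_*^2\star|x|^{-1}$ is bounded and tends to $0$ at infinity, while $r_n\to0$ in $\L^2_{\mathrm{loc}}$), so that $\D{\psi_n}=\D{\varphi_*}+\D{r_n}+o(1)$. Hence
\[
\E[\psi_n]=\E[\varphi_*]+\E[r_n]+o(1).
\]
Because $I_M=0$ for all $M\ge0$, we have $\E[\varphi_*]\ge I_\mu=0$ and $\E[r_n]\ge I_{\nrm{r_n}2^2}=0$; letting $n\to\infty$ in $\E[\psi_n]\to0$ forces $\E[\varphi_*]=0=I_\mu$. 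Thus $\varphi_*$ is a minimizer of $I_\mu$, and the scaling relation $I_M=M^3\,I_1$ of Proposition~\ref{prop:half} (the mass rescaling with $p=2$ in \eqref{eq:enerscal2} sends a minimizer of $I_\mu$ to one of $I_1$) shows that $I_1$ is attained.

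The two genuinely delicate points are therefore the a priori bound---where the interplay of the perfect-square relation with the Hardy--Littlewood--Sobolev upper bound on $\D{}$ is precisely what prevents concentration---and the vanishing of the Coulomb cross terms in the Brezis--Lieb expansion. Note that the usual strict-subadditivity route is unavailable here exactly because $I_M\equiv0$ is additive rather than strictly subadditive, which is why the argument instead rests on the nonnegativity of the energy of each bubble and never needs to exclude dichotomy.
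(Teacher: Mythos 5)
Your argument is correct and follows essentially the same route as the paper's proof: extract a nonzero weak limit from non-vanishing, split the energy via Brezis--Lieb (handling the Coulomb cross terms through the $\L^\infty$ bound on $\varphi_*^2\star|x|^{-1}$ and the decay of $\|\varphi_*\,r_n\|_{\L^1(\Real^3)}$), use $I_M\equiv 0$ to force $\E[\varphi_*]=0=I_\mu$, and transfer to $I_1$ by the $M^3$ scaling. The only inessential difference is your a priori bound: the uniform $\Huno$ bound on minimizing sequences is already given directly by Lemma~\ref{boundedness} via Gagliardo--Nirenberg, so the perfect-square/Hardy--Littlewood--Sobolev detour, while valid, is not needed.
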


By Lemma \ref{boundedness}, $\{\phi_n\}_{n\ge 1}$ is bounded in $\Huno$. Since $I_1$ is invariant by translation, relative compactness in $\Huno$ may only be expected up to translations. Also, since $I_\lambda=0$ for every $\lambda>0$, concentration--compactness type inequalities turn into equalities. In particular, there exist minimizing sequences that are not relatively compact in $\Huno$, up to any translations. According to the concentration--compactness terminology~\cite{bi:PLL-CC1cras,bi:PLL-CC1,bi:PLL-CC2}, either $\{\phi_n\}_{n\ge 1}$ fulfills \eqref{vanishing} and \emph{vanishing} occurs, or \eqref{nonvanishing} holds. If there exists some minimizing sequence for which vanishing does not occur, we will now prove that existence of a minimizer is guaranteed. 

\begin{proof} We first show that \eqref{nonvanishing} ensures the existence of a minimizer. Indeed, the new minimizing sequence $\{\phi_n(\cdot+y_n)\}_{n\ge 1}$ converges (up to a subsequence) to a function $\phi$ in $\Huno$, weakly in $\Huno$ and in $\L^p(\Real^3)$ for every $2\le p\le 6$, strongly in $\L^p_{\rm loc}(\Real^3)$ for every $1\le p<6$ (by the Rellich-Kondrachov theorem); consequently, it also converges almost everywhere in $\Real^3$. The condition \eqref{nonvanishing} guarantees that $\phi\neq 0$ since $\int_{B_{R_0}}\phi^2\,dx\ge\varepsilon_0$ by passing to the limit as $n$ goes to infinity. Let $\mu=\int_{\Real^3}\phi^2\,dx$ with $0<\mu\le 1$.

If $\mu=1$, we are done~: $\{\phi_n(\cdot+y_n)\}_{n\ge 1}$ converges to $\phi$ strongly in $\L^2(\Real^3)$, and therefore in $\L^p(\Real^3)$ for every $2\le p<6$ by H\"{o}lder's inequality. In particular, the convergence is also strong in $\L^3(\Real^3)$ and $0=\liminf_{n\to+\infty}\E[\phi_n]\ge\E[\phi)]\geq I_1$. Hence, $\E[\phi]=0$ and $\phi$ is a minimizer of $I_1$. In addition, the convergence is strong in $\Huno$ since all above inequalities turn into equalities.

If $\mu<1$, we are in the so-called dichotomy case. We shall prove that $\phi$ is a minimizer of $I_\mu$. Then, according to Lemma~\ref{prop:half}, $I_1$ is also achieved. Let us define $r_n:=\phi_n(\cdot+y_n)-\phi$. Then, $\{r_n\}_{n\ge 1}$ is bounded in $\Huno$. Up to a subsequence, it converges to $0$ weakly in $\Huno$ and in $\L^p(\Real^3)$ for every $2\le p<6$, strongly in $\L^p_{\rm loc}(\Real^3)$ for every $1\le p<6$, and almost everywhere in $\Real^3$. In addition, by taking weak limits we find
\begin{eqnarray}\label{eq:dic-grad}
&&\lim_{n\to+\infty}\int_{\Real^3} r_n^2\,dx=1-\mu\,, \nonumber \\ &&\int_{\Real^3} \vert \nabla\phi_n\vert ^2\,dx=\int_{\Real^3} \vert \nabla r_n\vert ^2\,dx+\int_{\Real^3} \vert \nabla\phi \vert ^2\,dx+o_n(1)\,,
\end{eqnarray}
where $o_n(1)$ is a shorthand for a quantity that goes to $0$ when $n$ goes to infinity. Using Theorem~1 in~\cite{bi:BrL1}, we have
\be{eq:dic-power}
\int_{\Real^3}\vert \phi_n\vert^3\,dx=\int_{\Real^3}\vert r_n\vert^3\,dx+\int_{\Real^3}\vert \phi\vert^3\,dx+o_n(1)\,.
\ee
We first check as in~\cite{CL1} that
\be{eq:conv-0}
\lim_{n\to+\infty} \Vert \phi\,r_n\Vert_{\L^p(\Real^3)}=0\quad\forall\,p\in[1,3)\,.
\ee
We just argue for $p=1$, as the analysis for the other powers follows by interpolation. Since $\{r_n\}_{n\ge 1}$ converges strongly to $0$ in $\L^2_{\rm loc}(\Real^3)$, $\{\phi\,r_n\}_{n\ge 1}$ converges strongly to~$0$ in $\L^1_{\rm loc}(\Real^3)$ as $n \to \infty$. Next, for every $R>0$ we have
\[
\int_{|x|\ge R} \vert \phi\,r_n\vert\,dx\le\Big(\int_{|x|\ge R} \vert \phi\vert^2\,dx\Big)^{1/2}\,\Big(\int_{|x|\ge R} \vert r_n\vert^2\,dx\Big)^{1/2}.\]
The first term in the right-hand side may be taken arbitrarily small for $R$ large enough since $\phi \in \Ldos$, while the second one is bounded independently of $n$ and~$R$ since $\{r_n\}_{n\ge 1}$ is bounded in $\Ldos$. Writing $\int_{\Real^3} \vert \phi\,r_n\vert\,dx=\int_{|x|\le R} \vert \phi\,r_n\vert\,dx+\int_{|x|\ge R} \vert \phi\,r_n\vert\,dx$ we get the result. By writing down
\[
\int_{\Real^3}\vert \phi_n\vert^3\,dx-\int_{\Real^3}\vert r_n\vert^3\,dx-\int_{\Real^3}\vert \phi\vert^3\,dx=3\int_{\Real^3}\vert \phi\,r_n\vert\,(\vert r_n\vert+\vert \phi\vert)\,dx\,,
\]
we obtain \eqref{eq:dic-power} since $\{\vert r_n\vert+\vert \phi\vert\}_{n\ge 1}$ is bounded in $\Ldos$ and $\{\phi\,r_n\}_{n\ge 1}$ converges to $0$ in $\L^2(\Real^3)$. Finally, we check that
\be{eq:dic-D}
\liminf_{n\to +\infty}\D{\phi_n}\geq \D\phi+\liminf_{n\to +\infty}\D{r_n}\,.
\ee
On the one hand, since $\{\phi_n\}_{n\ge 1}$ is bounded in $\Huno$, $\{\phi_n^2\star \frac 1{|x|}\}_{n\ge 1}$ is bounded in $\L^\infty(\Real^3)$ thanks to
\begin{eqnarray*}
\Big\|\phi^2\star\frac 1{|x|}\Big\|_{\L^\infty(\Real^3)}=\sup_{x\in \Real^3}\int_{\Real^3}
\frac{\phi^2(y)}{|x-y|}\,dy&\le&\sup_{x\in \Real^3}\Big(\int_{\Real^3} \frac{\phi^2(y)}{|x-y|^2}\,dy\Big)^{1/2}\,\nrm\phi2\\
&\le& 2\,\|\nabla\phi\|_{\Ldos}\,\|\phi\|_{\Ldos}\,,
\end{eqnarray*}
where we have used Cauchy-Schwarz' inequality and Hardy's inequality.
Then, we have
\[
\Big\vert \int_{\Real^3}\Big(\phi_n\star\frac 1{|x|}\Big)\,(\phi\,r_n)\,dx\Big\vert\le\Big\Vert \phi_n^2\star \frac1{|x|}\Big\Vert_{\L^\infty(\Real^3)}\,\Vert \phi\,r_n\Vert_{\L^1(\Real^3)}\,,
\]
and hence
\[
\lim_{n\to\infty}\int_{\Real^3}\Big(\phi_n\star\frac 1{|x|}\Big)\,(\phi\,r_n)\,dx=0\,,
\]
because of \eqref{eq:conv-0}. On the other hand, $\int_{\Real^3}\big((\phi\,r_n)\star\frac 1{|x|}\big)\,(\phi\,r_n)\,dx \geq 0$. Actually it is also converging to $0$ as $n\to\infty$, and \eqref{eq:dic-D} follows. Gathering together \eqref{eq:dic-grad}, \eqref{eq:dic-power} and \eqref{eq:dic-D}, we obtain
\begin{eqnarray*}
0=\limsup_{n\to+\infty}\E[\phi_n]=\liminf_{n\to+\infty}\E[\phi_n]&=&\E[\phi]+\liminf_{n\to+\infty}\E[r_n]\\
&&\quad\ge\E[\phi]\ge I_\mu=0
\end{eqnarray*}
since $\E[r_n]$ is nonnegative for every $n\ge1$. Then, all above inequalities turn into equalities. In particular, $I_\mu=\E[\phi]=0$ is attained. By Lemma~\ref{prop:half}, $I_1$ also is attained. This concludes the proof of Proposition~\ref{prop:onlyVanishing}.
\end{proof}

\subsection{The region \texorpdfstring{$\frac 12<\alpha<\frac 23$}{1/2<alpha<2/3}}\label{ssec:above}

We recall from Proposition \ref{nonnegative} the existence of a critical value $V_c$ such that $I_M=0$ if and only if $C\,M^{4\alpha-2}\le V_c$, and $I_M<0$ otherwise. Let us define
\[
M_c:=\left(\frac{V_c}C\right)^{\frac1{4\alpha-2}}
\]
and notice that $4\alpha-2$ is positive if $\frac 12<\alpha<\frac 23$. The main result in this region is stated in the following proposition.

\begin{proposition}\label{prop:above} Assume that $\alpha\in(\frac12,\frac23)$. The following assertions hold true:
\begin{enumerate}
\item[(i)] If $M<M_c$, then $I_M$ is not achieved.
\item[(ii)] If $M=M_c$, then there exists a minimizer.
\item[(iii)] If $M>M_c$, then the strict inequalities \eqref{ineqstrict} always hold, and in particular there exists a minimizer.
\end{enumerate}
\end{proposition}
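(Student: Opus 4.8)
The plan is to treat the three mass regimes separately, relying on the scaling relation $I_M = M^{\frac{2-\alpha}{2-3\alpha}}\,(\cdots)$-type identities and on Proposition~\ref{nonnegative} which pins down exactly when $I_M=0$ versus $I_M<0$. For item (i), the regime $M<M_c$ corresponds to $C\,M^{4\alpha-2}<V_c(\alpha)$, so by Proposition~\ref{nonnegative} we have $I_M=0$, and in fact the lower bound \eqref{LowerBoundEnergy} from Lemma~\ref{lemaequiv} gives
\[
\E[\varphi]\ge\frac 14\,\lambda[\varphi]\,\D\varphi\left[1-\left(\frac{C\,M^{4\alpha-2}}{V_c(\alpha)}\right)^{\frac1{3-2\alpha}}\right]>0\quad\forall\,\varphi\in\Sigma_M
\]
with the bracket strictly positive. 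Hence any putative minimizer $\varphi$ would satisfy $\E[\varphi]=0$, which forces $\lambda[\varphi]\,\D\varphi=0$; since $\lambda[\varphi]>0$ for nontrivial $\varphi$, this would require $\D\varphi=0$, which is absurd for $\varphi\in\Sigma_M$ with $M>0$. So $I_M=0$ is not attained.

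For item (iii), the regime $M>M_c$ gives $I_M<0$. First I would rule out vanishing via Remark~\ref{Rem:Vanishing} (Lemma~I.1 in~\cite{bi:PLL-CC2}), since $I_M<0$. The core task is to establish the strict subadditivity \eqref{ineqstrict}. The strategy is to exploit the rescaled representation \eqref{eq:enerscal2}: choosing the scaling exponent appropriately, one writes $I_M$ as a function of $M$ whose behaviour can be compared across the splitting $M=M'+(M-M')$. Because in this range the subcritical condition $C\,M^{4\alpha-2}>V_c(\alpha)$ is \emph{stable under increasing mass}, both $I_{M'}$ and $I_{M-M'}$ sit in a regime controlled relative to $I_M$; the point is to show that the homogeneity of the dominant (negative) term beats the splitting, exactly as in the convexity argument used for $\alpha=\frac12$ in Proposition~\ref{prop:half}. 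I expect this to reduce to verifying a strict convexity/superadditivity property of $M\mapsto I_M$ once $I_M<0$, from which \eqref{ineqstrict} and hence relative compactness (and the existence of a minimizer) follow by the standard concentration--compactness dichotomy.

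For item (ii), the critical mass $M=M_c$ (so $C\,M^{4\alpha-2}=V_c(\alpha)$) is the delicate borderline where $I_M=0$ yet a minimizer exists; this is where Corollary~\ref{cor:EstimCrit} enters. The plan is to take a minimizing sequence $\{\varphi_n\}$, note it is bounded in $\Huno$ by Lemma~\ref{boundedness}, and exclude vanishing by hand: the sharp interpolation of Corollary~\ref{cor:EstimCrit} bounds the power term by $\nrm{\nabla\varphi}2^{4\alpha-1}\,\D\varphi^{1-\alpha}$, and since equality in \eqref{App:Interpolation2} at $M=M_c$ is needed for the energy to vanish, a vanishing sequence (whose $\L^{2\alpha+2}$ norm tends to zero) cannot keep the energy at zero while carrying the correct mass. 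After excluding vanishing, \eqref{nonvanishing} yields a nonzero weak limit $\varphi_*$ with mass $\mu\le M_c$; one then runs the Brezis--Lieb splitting exactly as in the proof of Proposition~\ref{prop:onlyVanishing} (decomposing the gradient, the $\L^{2\alpha+2}$, and the Coulomb terms, the last via the $\L^\infty$ bound on $\varphi_n^2\star|x|^{-1}$ through Hardy's inequality) to show $\E[\varphi_*]=I_\mu=0$ is attained, and finally upgrade to a minimizer of $I_{M_c}$ using the scaling relation. \textbf{The main obstacle} I anticipate is the critical case (ii): ruling out dichotomy at exactly $M=M_c$ requires a careful accounting that the energy of the residual piece $r_n$ is nonnegative and that strict subadditivity is \emph{only just} lost, so that the limit $\varphi_*$ must already carry the full critical mass — this is precisely the sharpness captured by $V_c(\alpha)$ and the reason the borderline is attainable here while it is merely open for $\alpha=\frac12$.
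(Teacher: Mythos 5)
Part (i) of your proposal is correct and in fact takes a shorter route than the paper: instead of rescaling to $\Sigma_1$ and comparing the functionals $\E_M$ and $\E_{M_c}$, you invoke the lower bound \eqref{LowerBoundEnergy} to get $\E[\varphi]>0$ for every nontrivial $\varphi\in\Sigma_M$ when $C\,M^{4\alpha-2}<V_c(\alpha)$, so the infimum $I_M=0$ cannot be attained; this is a legitimate shortcut. Part (iii), however, is only a sketch, and the reduction you announce is not accurate: the ``convexity argument exactly as for $\alpha=\frac12$'' relies on the exact homogeneity $I_M=M^3\,I_1$, which holds only at $\alpha=\frac12$. For $\alpha\in(\frac12,\frac23)$ one only has the one-sided scaling inequality $I_{M'}\le\big(\frac{M'}M\big)^3\,I_M$ for $M'>M> M_c$, obtained by testing with $\tilde\varphi=\big(\frac{M'}M\big)^2\varphi\big(\frac{M'}M\,\cdot\big)$ and using that the coefficient of the negative power term strictly increases with the mass; and one must then treat separately the cases where $I_m$ and/or $I_{M-m}$ vanish (which happens whenever the corresponding mass is $\le M_c$). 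Neither the key inequality nor this case analysis appears in your write-up, so \eqref{ineqstrict} is asserted rather than proved.

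The genuine gap is in part (ii). Since $I_{M_c}=0$, Remark~\ref{Rem:Vanishing} provides explicit \emph{vanishing} minimizing sequences: dilating any fixed $\varphi\in\Sigma_{M_c}$ by $\varphi_\lambda=\lambda^{3/2}\varphi(\lambda\,\cdot)$ with $\lambda\to0$ keeps the mass equal to $M_c$, sends the energy to $0$, and converges weakly to $0$. Hence your claim that ``a vanishing sequence cannot keep the energy at zero while carrying the correct mass'' is false, and no interpolation inequality can exclude vanishing for an \emph{arbitrary} minimizing sequence. The essential idea you are missing is to choose a \emph{particular} minimizing sequence, namely the minimizers $\varphi_n$ of $I_{M_n}$ with $M_n=M_c+\frac1n$ (which exist by (iii)): for these, the Euler--Lagrange and virial identities of Corollary~\ref{Cor:CriticalMass}, combined with Corollary~\ref{cor:EstimCrit}, yield $\liminf_{n\to\infty}\ir{|\varphi_n|^{2\alpha+2}}>0$, and only then does Lions' vanishing lemma (Lemma I.1 in~\cite{bi:PLL-CC2}) give \eqref{nonvanishing}. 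Your concluding step is also off: if the weak limit $\varphi_*$ has mass $\mu<M_c$, one cannot ``upgrade to a minimizer of $I_{M_c}$ using the scaling relation'' (that mechanism is special to $\alpha=\frac12$, where $I_M=M^3 I_1$); instead, $\varphi_*$ would be a minimizer of $I_\mu$ with $\mu<M_c$, which contradicts part (i) and thereby forces $\mu=M_c$.
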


In the critical case $M=M_c$, the strict inequalities \eqref{ineqstrict} do not hold. As consequence, the stability of such a solution cannot be ensured by usual arguments.

\begin{proof} We first assume that $M<M_c$, so that $I_M=0$ by Proposition~\ref{nonnegative}. Define
\[
\E_M[\varphi]:=\frac12\int_{\Real^3}|\nabla\varphi|^2\,dx+\,\frac14\,\D\varphi-M^{4\alpha-2}\,\frac C{2\alpha+2}\int_{\Real^3}|\varphi|^{2\alpha+2}\,dx\,.
\]
We may observe that $\E_1=\E$. By applying Lemma~\ref{scaling} with $p=2$ and $q=1$ (or, equivalently, \eqref{eq:enerscal2} with $p=2$), we get
\[
\E\big[M^2\,\varphi(M\cdot)\big]=M^3\,\E_M[\varphi]\quad\forall\,\varphi\in\Sigma_1\,.
\]
We argue by contradiction. Assume that $I_M$ is achieved. Then, there exists a minimizer $\varphi_M$ of
\[
\inf\{\E_M[\varphi]\,:\,\varphi\in\Sigma_1\}=M^{-3}\,I_M\,.
\]
In this way, $\varphi_M$ is a test function for $M_c^{-3}\,I_{M_c}$ and $\E_{M_c}[\varphi_M]<\E_M[\varphi_M]=0$ since $M_c>M$. We contradict the fact that $I_{M_c}=0$, thus proving (i).

\medskip Next, we assume that $M>M_c$. In order to prove the strict inequalities \eqref{ineqstrict} and establish (iii) in Proposition~\ref{prop:above}, the key point is the following.

\begin{lemma}\label{lem:keyscaling} If $M> M_c$, then we have
\be{keyscaling}
I_{M'}\le\Big(\frac{M'}M\Big)^3\,I_M\quad\forall\,M'>M\,.
\ee
In particular, the function $M\mapsto I_M$ is decreasing on $[M_c,+\infty)$. Furthermore,
\be{strictm}
I_M<I_m+I_{M-m}\quad\forall\,m\in(0,M)\,.
\ee
\end{lemma}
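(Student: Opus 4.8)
The claim splits into two parts: the scaling inequality \eqref{keyscaling} and the strict subadditivity \eqref{strictm}. The plan is to prove \eqref{keyscaling} first by a direct scaling argument, then read off monotonicity, and finally deduce \eqref{strictm} from \eqref{keyscaling} combined with the large inequalities \eqref{ineqlarge}.

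\textbf{The scaling inequality.} First I would exploit the special relation from the rescaled problem \eqref{eq:enerscal2} with $p=2$, $q=1$, which was already used in the proof of part (i): for any $\varphi\in\Sigma_1$ one has $\E[M^2\,\varphi(M\,\cdot)]=M^3\,\E_M[\varphi]$, where $\E_M$ is the modified functional carrying the factor $M^{4\alpha-2}$ in front of the power term. Equivalently, $M^{-3}\,I_M=\inf\{\E_M[\varphi]:\varphi\in\Sigma_1\}$. The key observation is that for $M>M_c$ we have $I_M<0$ (Proposition~\ref{nonnegative}), so any near-minimizer $\varphi$ of $\E_M$ on $\Sigma_1$ has negative energy, and in particular the power term must be strictly positive. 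Since $4\alpha-2>0$ on $(\tfrac12,\tfrac23)$, the coefficient $M^{4\alpha-2}$ is increasing in $M$, so for $M'>M$ the functional satisfies $\E_{M'}[\varphi]\le\E_{M}[\varphi]$ whenever the power term is nonnegative. Taking an infimum over $\Sigma_1$ then gives $M'^{-3}\,I_{M'}=\inf\E_{M'}\le\inf\E_{M}=M^{-3}\,I_M$, which rearranges to \eqref{keyscaling}. The monotonicity statement on $[M_c,+\infty)$ follows at once: for $M_c\le M<M'$, inequality \eqref{keyscaling} with the roles adjusted gives $I_{M'}\le (M'/M)^3\,I_M<I_M$ because $I_M<0$ (or $I_M=0$ at $M=M_c$, where one checks $I_{M'}<0$ directly).

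\textbf{The strict subadditivity.} For \eqref{strictm}, fix $m\in(0,M)$ with $M>M_c$. The idea is to play the superadditive scaling of the map $M\mapsto M^{-3}\,I_M$ against the large inequalities. Because $t\mapsto t^3$ is strictly convex and $I_M<0$, the relation \eqref{keyscaling} upgrades to a strict comparison: writing $M=m+(M-m)$, one has $I_m\ge (m/M)^3\,I_M$ and $I_{M-m}\ge ((M-m)/M)^3\,I_M$, and summing gives
\[
I_m+I_{M-m}\ge\Big[\big(\tfrac mM\big)^3+\big(\tfrac{M-m}M\big)^3\Big]\,I_M.
\]
Since $(m/M)^3+((M-m)/M)^3<1$ for $m\in(0,M)$ and $I_M<0$, the bracket times $I_M$ is strictly greater than $I_M$, yielding \eqref{strictm}. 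I would be careful to handle the case where $m$ or $M-m$ falls below $M_c$: there $I_m=0$ or $I_{M-m}=0$, but the large inequality \eqref{ineqlarge} still gives $I_M\le I_m+I_{M-m}$, and one argues that equality is impossible because a minimizing configuration would have to concentrate all its mass in the subcritical piece, contradicting $I_M<0$.

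\textbf{Main obstacle.} The routine scaling manipulations are straightforward; the delicate point is the endpoint bookkeeping when one of $m$ or $M-m$ is below $M_c$, since then \eqref{keyscaling} does not apply to that piece and the convexity argument must be supplemented by the large inequality \eqref{ineqlarge} together with the strict sign $I_M<0$ to exclude equality. I expect this case distinction—ensuring strictness holds uniformly for \emph{all} $m\in(0,M)$ rather than only for splittings into two supercritical masses—to be where the real care is needed.
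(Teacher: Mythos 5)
Your overall route is the same as the paper's: prove \eqref{keyscaling} by the scaling $\varphi\mapsto(M'/M)^2\,\varphi\big((M'/M)\,\cdot\big)$, using that the coefficient of the (nonnegative) power term grows like $(M'/M)^{4\alpha-2}>1$, and then deduce \eqref{strictm} from the strict convexity of $t\mapsto t^3$ together with $I_M<0$. Your main computation in the case $m>M_c$ and $M-m>M_c$ is correct and is exactly the paper's: $I_m+I_{M-m}\ge\big[(m/M)^3+((M-m)/M)^3\big]\,I_M>I_M$ since the bracket is $1-3t(1-t)<1$ with $t=m/M$. (A minor remark: your caveat ``whenever the power term is nonnegative'' is vacuous, since $\int_{\Real^3}|\varphi|^{2\alpha+2}\,dx\ge0$ always; the comparison $\E_{M'}\le\E_M$ on $\Sigma_1$ needs no input about near-minimizers or the sign of $I_M$.)

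The one place where the proposal goes astray is precisely the endpoint bookkeeping you flag at the end. The fix you sketch --- invoking the large inequality \eqref{ineqlarge} and arguing that ``a minimizing configuration would have to concentrate all its mass in the subcritical piece'' --- is not a proof: \eqref{ineqlarge} gives $I_M\le I_m+I_{M-m}$, which points in the wrong direction for strictness, and no minimizer is known to exist at this stage, so there is no minimizing configuration to reason about. The correct resolution is much simpler and uses only what you have already established. If both $m\le M_c$ and $M-m\le M_c$, then $I_m+I_{M-m}=0>I_M$, because $M>M_c$ forces $I_M<0$ by Proposition~\ref{nonnegative}. If exactly one piece is subcritical, say $I_m=0$ and $I_{M-m}<0$, then \eqref{strictm} reduces to $I_M<I_{M-m}$, which is \eqref{keyscaling} applied with base mass $M-m>M_c$ and target mass $M>M-m$ (equivalently, the strict monotonicity you already proved on $[M_c,+\infty)$). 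With this replacement your argument is complete and coincides with the case analysis in the paper's proof.
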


\begin{proof} Consider $\varphi\in\Sigma_M$ and let $\tilde\varphi:=\big(\frac{M'}M \big)^2\,\varphi\big(\frac{M'}M\cdot\big)$. We notice that $\tilde\varphi\in\Sigma_{M'}$~and
\begin{eqnarray*}
I_{M'}\le\E[\tilde\varphi]=\left(\tfrac{M'}M\right)^3\,\left[\tfrac12\int_{\Real^3}|\nabla\varphi|^2\,dx+\,\tfrac12\,\D\varphi \right.-\left.\tfrac C{2\alpha+2}\,\left(\tfrac{M'}M\right)^{4\alpha-2}\int_{\Real^3}|\varphi|^{2\alpha+2}\,dx\right]&&\\
\le\left(\tfrac{M'}M\right)^3\,\E[\varphi]\,.&&
\end{eqnarray*}
We deduce \eqref{keyscaling} by taking the infimum of the right-hand side over all functions $\varphi$ in $\Sigma_M$ and the monotonicity of $M\mapsto I_M$ on $[M_c,+\infty)$ follows.

We now turn our attention to the proof of \eqref{strictm}. If $I_m=I_{M-m}=0$, inequalities \eqref{strictm} obviously holds for any $M>M_c$, since $I_M$ is negative. If $I_m<0$ but $I_{M-m}=0$ (so that $M_c<m$ and $M-m\le M_c$), then \eqref{strictm} reduces to $I_M<I_m$, which is again guaranteed by \eqref{keyscaling}. If both $I_m$ and $I_{M-m}$ are negative (this is equivalent to $m>M_c$ and $M-m>M_c$, and therefore it may occur only if $M>2M_c$), then we have
\[
I_M\le\Big(\frac Mm\Big)^3\,I_M<\frac Mm\,I_m\quad\mbox{and}\quad I_M\le\Big(\frac M{M-m}\Big)^3\,I_M<\frac M{M-m}\,I_{M-m}
\]
by using \eqref{keyscaling}. Hence, $I_M=\frac m M\,I_M+\frac {M-m}M\,I_M<I_m+I_{M-m}$. This concludes the proof of Lemma~\ref{lem:keyscaling}.\end{proof}

Let us come back to the proof of Proposition \ref{prop:above}. In order to prove the existence of minimizers in the limiting case $C\,M^{4\alpha-2}=V_c$, that is $M=M_c$, we follow the arguments in~\cite{JeanLuo2012}, where a proof for the case $C=1$ is given. As noted in Remark~\ref{Rem:Vanishing}, relative compactness (up to translations) of all minimizing sequences cannot be proved in this case, since $I_{M_c}=0$. We build a particular minimizing sequence as follows.

Let $M_n=M_c+\frac1n$, for every positive integer $n$, and assume that $\varphi_n$ is a minimizer of $I_{M_n}$ in $\Sigma_{M_n}$, which is already known to exist since $M_n>M_c$ and therefore $I_{M_n}<0$ for any $n\ge1$. Since $\{M_n\}_{n\ge1}$ converges towards $M_c$, it can be deduced that $\lim_{n\to\infty}\E[\varphi_n]=\lim_{n\to\infty}I_{M_n}=I_{M_c}=0$. With the notations of Corollary~\ref{Cor:CriticalMass}, this means that $\lim_{n\to\infty}\eta_{M_n}=0$. If we combine the results of Corollary~\ref{cor:EstimCrit} and Corollary~\ref{Cor:CriticalMass}, then we obtain
\begin{multline*}
\frac 14\,\varepsilon_{M_n}-\frac 32\,\eta_{M_n}\le\C_{1/2}^{2-2\alpha}\,\CGN(1)^{2\alpha-1}\,M^{\alpha-\frac 12}\,\left[\frac 12\,(3\alpha-1)\,\varepsilon_{M_n}-(5\alpha-1)\,\eta_{M_n}\right] ^{4\alpha-1}\\
\Big[(2-3\alpha)\,\varepsilon_{M_n}-2\,(2-\alpha)\,\eta_{M_n}\Big]^{1-\alpha}\,.
\end{multline*}
By passing to the limit as $n\to\infty$, we find that
\[
\frac 14\le\C_{1/2}^{2-2\alpha}\,\CGN(1)^{2\alpha-1}\,M^{\alpha-\frac 12}\,\left[\frac 12\,(3\alpha-1)\right] ^{4\alpha-1}\Big[(2-3\alpha)\Big]^{1-\alpha}\liminf_{n\to\infty}\varepsilon_{M_n}^{3\alpha-1}\,,
\]
thus proving that $\liminf_{n\to\infty}\varepsilon_{M_n}>0$ and hence
\[
\liminf_{n\to\infty}\ir{|\varphi_n|^{2\alpha+2}}>0\,.
\]
Then, by Lemma I.1 in~\cite{bi:PLL-CC2} the sequence $\{\varphi_n\}_{n\ge 1}$ satisfies the non-vanishing condition \eqref{nonvanishing}. Consequently, up to translations, there exists a subsequence that converges weakly in $\Huno$, strongly in $\L^2_{\rm loc} (\Real^3)$ and pointwise almost everywhere, towards a nonzero function $\varphi_\infty$. This sequence can be also assumed to be strongly convergent in $\L^2_{\rm loc} (\Real^3)$ and pointwise convergent almost everywhere. Thanks to~\cite{bi:BrL1} and Lemma 2.2 in~\cite{ZaZa}, we get
\[
0=\lim_ {n \to \infty}I_{M_n}=\lim_ {n \to \infty}\E[\varphi_n]=\E[\varphi_\infty]+\lim_ {n \to \infty}\E[\varphi_n-\varphi_\infty]\,.
\]
Since $0 <\|\varphi_\infty \|_{\Ldos}^2\le M_c$, we have $\E[\varphi_\infty]\geq 0$ and
\[
\lim_ {n \to \infty} \| \varphi_n-\varphi_\infty \|_{\Ldos}^2=M_c-\|\varphi_\infty \|_{\Ldos}^2 <M_c\,,
\]
then $\lim_ {n \to \infty}\E[\varphi_n-\varphi_\infty]\ge 0$. Therefore, $\E[\varphi_\infty]=0$. To conclude the proof of Proposition \ref{prop:above} we observe that $\|\varphi_\infty \|_{\Ldos}^2=M_c$. Otherwise, $\varphi_\infty$ is a minimizer of $I_M $, for some $M<M_c$, and we reach a contradiction with the first statement in Proposition \ref{prop:above}.\hfill$\square$\end{proof}


\medskip\noindent{\scriptsize\copyright~2013 by the authors. This paper may be reproduced, in its entirety, for non-commercial~purposes.}

\bibliographystyle{siam}\small\bibliography{References}

\begin{thebibliography}{10}

\bibitem{MR2422637}
{\sc A.~Azzollini and A.~Pomponio}, {\em Ground state solutions for the
  nonlinear {S}chr\"odinger-{M}axwell equations}, J. Math. Anal. Appl., 345
  (2008), pp.~90--108.

\bibitem{BeJeLu}
{\sc J.~Bellazzini, L.~Jeanjean, and T.~Luo}, {\em Existence and instability of
  standing waves with prescribed norm for a class of
  {S}chr{\"o}dinger--{P}oisson equations}, Proceedings of the London
  Mathematical Society,  (2013).

\bibitem{BeSiJFA}
{\sc J.~Bellazzini and G.~Siciliano}, {\em Scaling properties of functionals
  and existence of constrained minimizers}, J. Funct. Anal., 261 (2011),
  pp.~2486--2507.

\bibitem{BeSiZAMP}
\leavevmode\vrule height 2pt depth -1.6pt width 23pt, {\em Stable standing
  waves for a class of nonlinear {S}chr\"odinger-{P}oisson equations}, Z.
  Angew. Math. Phys., 62 (2011), pp.~267--280.

\bibitem{BoLoSaSo}
{\sc O.~Bokanowski, J.~L. L{\'o}pez, {\'O}.~S{\'a}nchez, and J.~Soler}, {\em
  Long time behaviour to the {S}chr\"odinger-{P}oisson-{$X\sp \alpha$}
  systems}, in Mathematical physics of quantum mechanics, vol.~690 of Lecture
  Notes in Phys., Springer, Berlin, 2006, pp.~217--232.

\bibitem{BoLoSo}
{\sc O.~Bokanowski, J.~L. L{\'o}pez, and J.~Soler}, {\em On an exchange
  interaction model for quantum transport: the
  {S}chr\"odinger-{P}oisson-{S}later system}, Math. Models Methods Appl. Sci.,
  13 (2003), pp.~1397--1412.

\bibitem{BoMe2011}
{\sc D.~Bonheure and C.~Mercuri}, {\em Embedding theorems and existence results
  for nonlinear {S}chr\"odinger-{P}oisson systems with unbounded and vanishing
  potentials}, J. Differential Equations, 251 (2011), pp.~1056--1085.

\bibitem{bi:BrL1}
{\sc H.~Br{\'e}zis and E.~Lieb}, {\em A relation between pointwise convergence
  of functions and convergence of functionals}, Proc. Amer. Math. Soc., 88
  (1983), pp.~486--490.

\bibitem{CL1}
{\sc I.~Catto and P.-L. Lions}, {\em Binding of atoms and stability of
  molecules in {H}artree and {T}homas-{F}ermi type theories. {I}. {A} necessary
  and sufficient condition for the stability of general molecular systems},
  Comm. Partial Differential Equations, 17 (1992), pp.~1051--1110.

\bibitem{Caze}
{\sc T.~Cazenave}, {\em An introduction to nonlinear Schr{\"o}dinger
  equations}, vol.~26, Instituto de Matem{\'a}tica, UFRJ, 1993.

\bibitem{CaLi}
{\sc T.~Cazenave and P.-L. Lions}, {\em Orbital stability of standing waves for
  some nonlinear {S}chr\"odinger equations}, Comm. Math. Phys., 85 (1982),
  pp.~549--561.

\bibitem{MR1986248}
{\sc G.~M. Coclite}, {\em A multiplicity result for the nonlinear
  {S}chr\"odinger-{M}axwell equations}, Commun. Appl. Anal., 7 (2003),
  pp.~417--423.

\bibitem{DAMu}
{\sc T.~D'Aprile and D.~Mugnai}, {\em Solitary waves for nonlinear
  {K}lein-{G}ordon-{M}axwell and {S}chr\"odinger-{M}axwell equations}, Proc.
  Roy. Soc. Edinburgh Sect. A, 134 (2004), pp.~893--906.

\bibitem{MR1896096}
{\sc P.~d'Avenia}, {\em Non-radially symmetric solutions of nonlinear
  {S}chr\"odinger equation coupled with {M}axwell equations}, Adv. Nonlinear
  Stud., 2 (2002), pp.~177--192.

\bibitem{FelliSchneider}
{\sc V.~Felli and M.~Schneider}, {\em Perturbation results of critical elliptic
  equations of {C}affarelli-{K}ohn-{N}irenberg type}, J. Differential
  Equations, 191 (2003), pp.~121--142.

\bibitem{MR2926239}
{\sc V.~Georgiev, F.~Prinari, and N.~Visciglia}, {\em On the radiality of
  constrained minimizers to the {S}chr\"odinger-{P}oisson-{S}later energy},
  Ann. Inst. H. Poincar\'e Anal. Non Lin\'eaire, 29 (2012), pp.~369--376.

\bibitem{doi:10.1142/S0219199712500034}
{\sc I.~Ianni and D.~Ruiz}, {\em Ground and bound states for a static
  {S}chr\"odinger-{P}oisson-{S}later problem}, Communications in Contemporary
  Mathematics, 14 (2012), p.~1250003.

\bibitem{IlSwLa}
{\sc R.~Illner, P.~F. Zweifel, and H.~Lange}, {\em Global existence, uniqueness
  and asymptotic behaviour of solutions of the {W}igner-{P}oisson and
  {S}chr\"odinger-{P}oisson systems}, Math. Methods Appl. Sci., 17 (1994),
  pp.~349--376.

\bibitem{JeanLuo2012}
{\sc L.~Jeanjean and T.~Luo}, {\em Sharp nonexistence results of prescribed
  {L}$^2$-norm solutions for some class of {S}chr{\"o}dinger--{P}oisson and
  quasi-linear equations}, Zeitschrift f{\"u}r angewandte Mathematik und
  Physik,  (2012), pp.~1--18.

\bibitem{Lieb}
{\sc E.~H. Lieb}, {\em Existence and uniqueness of the minimizing solution of
  {C}hoquard's nonlinear equation}, Studies in Appl. Math., 57 (1976/77),
  pp.~93--105.

\bibitem{LiebLoss}
{\sc E.~H. Lieb and M.~Loss}, {\em Analysis}, vol.~14 of Graduate Studies in
  Mathematics, American Mathematical Society, Providence, RI, second~ed., 2001.

\bibitem{LiSi}
{\sc E.~H. Lieb and B.~Simon}, {\em The {T}homas-{F}ermi theory of atoms,
  molecules and solids}, Advances in Math., 23 (1977), pp.~22--116.

\bibitem{bi:PLL-CC1cras}
{\sc P.-L. Lions}, {\em Principe de concentration-compacit\'e en calcul des
  variations}, C. R. Acad. Sci. Paris S\'er. I Math., 294 (1982), pp.~261--264.

\bibitem{bi:PLL-CC1}
{\sc P.-L. Lions}, {\em The concentration-compactness principle in the calculus
  of variations. {T}he locally compact case. {I}}, Ann. Inst. H. Poincar\'e
  Anal. Non Lin\'eaire, 1 (1984), pp.~109--145.

\bibitem{bi:PLL-CC2}
\leavevmode\vrule height 2pt depth -1.6pt width 23pt, {\em The
  concentration-compactness principle in the calculus of variations. {T}he
  locally compact case. {II}}, Ann. Inst. H. Poincar\'e Anal. Non Lin\'eaire, 1
  (1984), pp.~223--283.

\bibitem{Lions2}
\leavevmode\vrule height 2pt depth -1.6pt width 23pt, {\em Solutions of
  {H}artree-{F}ock equations for {C}oulomb systems}, Comm. Math. Phys., 109
  (1987), pp.~33--97.

\bibitem{Lions1992}
{\sc P.-L. Lions}, {\em On some minimization problems in mathematical physics:
  how to check strict subadditivity conditions ?}, Progress in variational
  methods in Hamiltonian systems and elliptic equations (L'Aquila, 1990),
  Pitman Res. Notes Math. Ser., 243, Longman Sci. Tech., Harlow, 1992,
  pp.~119--126.

\bibitem{LopesMaris}
{\sc O.~Lopes and M.~Mari{\c{s}}}, {\em Symmetry of minimizers for some
  nonlocal variational problems}, J. Funct. Anal., 254 (2008), pp.~535--592.

\bibitem{Mauser}
{\sc N.~J. Mauser}, {\em The {S}chr\"odinger-{P}oisson-{$X^\alpha$} equation},
  Appl. Math. Lett., 14 (2001), pp.~759--763.

\bibitem{DR}
{\sc D.~Ruiz}, {\em The {S}chr\"odinger-{P}oisson equation under the effect of
  a nonlinear local term}, J. Funct. Anal., 237 (2006), pp.~655--674.

\bibitem{RuSo}
{\sc E.~Ru{\'{\i}}z~Arriola and J.~Soler}, {\em A variational approach to the
  {S}chr\"odinger-{P}oisson system: asymptotic behaviour, breathers, and
  stability}, J. Statist. Phys., 103 (2001), pp.~1069--1106.

\bibitem{SaSo1}
{\sc {\'O}.~S{\'a}nchez and J.~Soler}, {\em Asymptotic decay estimates for the
  repulsive {S}chr\"odinger-{P}oisson system}, Math. Methods Appl. Sci., 27
  (2004), pp.~371--380.

\bibitem{SaSo}
\leavevmode\vrule height 2pt depth -1.6pt width 23pt, {\em Long-time dynamics
  of the {S}chr\"odinger-{P}oisson-{S}later system}, J. Statist. Phys., 114
  (2004), pp.~179--204.

\bibitem{MR2318269}
{\sc Z.~Wang and H.-S. Zhou}, {\em Positive solution for a nonlinear stationary
  {S}chr\"odinger-{P}oisson system in {$\Bbb R^3$}}, Discrete Contin. Dyn.
  Syst., 18 (2007), pp.~809--816.

\bibitem{ZaZa}
{\sc L.~Zhao and F.~Zhao}, {\em On the existence of solutions for the
  {S}chr\"odinger-{P}oisson equations}, J. Math. Anal. Appl., 346 (2008),
  pp.~155--169.

\end{thebibliography}
\end{document}